\numberwithin{equation}{section}    % *must* be *before* cleveref!
\definecolor{dblue}{HTML}{0455BF}
\definecolor{dgreen}{HTML}{02724A}
\definecolor{dgreen2}{HTML}{025951}
\definecolor{dred}{HTML}{D90404}
\definecolor{dviolet}{HTML}{42208C}
\definecolor{labelkey}{HTML}{025951}
\definecolor{refkey}{HTML}{025951}
\setlist{itemsep=-2.0pt}
\g@addto@macro\th@plain{
\thm@headfont{\bfseries\sffamily}
\thm@notefont{}}
\g@addto@macro\th@definition{
\thm@headfont{\bfseries\sffamily}
\thm@notefont{}}
\g@addto@macro\th@remark{
\thm@headfont{\bfseries\sffamily}
\thm@notefont{}}
\theoremstyle{plain}
\newtheorem{theorem}{Theorem}[section]
\newtheorem{proposition}[theorem]{Proposition}
\newtheorem{corollary}[theorem]{Corollary}
\newtheorem{lemma}[theorem]{Lemma}
\theoremstyle{definition}
\newtheorem{definition}[theorem]{Definition}
\newtheorem{example}[theorem]{Example}
\newtheorem{problem}[theorem]{Problem}
\newtheorem{assumption}[theorem]{Assumption}
\theoremstyle{remark}
\newtheorem{remark}[theorem]{Remark}
\newtheorem{notation}[theorem]{Notation}
\DeclareMathDelimiterSet{\scal}[2]{
\selectdelim[l]<{#1}
\mathpunct{}\selectdelim[p]|
{#2}\selectdelim[r]>}
\newcommand{\menge}[2]{\bigl\{{#1}\mid{#2}\bigr\}} 
\DeclareMathDelimiterSet{\Menge}[2]{\selectdelim[l]\{
{#1}\selectdelim[m]|{#2}\selectdelim[r]\}}
\newcommand*\Cdot{{\mkern 1.6mu\cdot\mkern 1.6mu}}
\def\upintkern@{\mkern-7mu\mathchoice{\mkern-3.5mu}{}{}{}}
\def\upintdots@{\mathchoice{\mkern-4mu\@cdots\mkern-4mu}%
{{\cdotp}\mkern1.5mu{\cdotp}\mkern1.5mu{\cdotp}}%
{{\cdotp}\mkern1mu{\cdotp}\mkern1mu{\cdotp}}%
{{\cdotp}\mkern1mu{\cdotp}\mkern1mu{\cdotp}}}
\DeclareFontFamily{OMX}{mdbch}{}
\DeclareFontShape{OMX}{mdbch}{m}{n}{ <->s * [0.8]  mdbchr7v }{}
\DeclareFontShape{OMX}{mdbch}{b}{n}{ <->s * [0.8]  mdbchb7v }{}
\DeclareFontShape{OMX}{mdbch}{bx}{n}{<->ssub * mdbch/b/n}{}
\DeclareSymbolFont{uplargesymbols}{OMX}{mdbch}{m}{n}
\DeclareMathSymbol{\upintop}{\mathop}{uplargesymbols}{82}
\DeclareMathSymbol{\upointop}{\mathop}{uplargesymbols}{"48}
\renewcommand{\int}{\DOTSI\upintop\ilimits@}
\renewcommand{\oint}{\DOTSI\upointop\ilimits@}
\newcommand{\qq}{\mathscr{Q}}
\newcommand{\RR}{\mathbb{R}}
\newcommand{\NN}{\mathbb{N}}
\newcommand{\HH}{\mathcal{H}}
\newcommand{\GG}{\mathfrak{H}}
\newcommand{\AW}{\mathsf{A}_{\omega}}
\newcommand{\HW}{\mathsf{H}_{\omega}}
\newcommand{\TW}{\mathsf{T}_{\omega}}
\newcommand{\RW}{\mathsf{R}_{\omega}}
\newcommand{\LW}{\mathsf{L}_{\omega}}
\newcommand{\fw}{\mathsf{f}_{\omega}}
\newcommand{\AS}{\mathsf{A}}
\newcommand{\HS}{\mathsf{H}}
\newcommand{\LS}{\mathsf{L}}
\newcommand{\XS}{\mathsf{X}}
\newcommand{\TS}{\mathsf{T}}
\newcommand{\BE}{\EuScript{B}}
\newcommand{\FF}{\EuScript{F}}
\newcommand{\pinf}{{+}\infty}
\newcommand{\minf}{{-}\infty}
\newcommand{\RXX}{\intv{\minf}{\pinf}}
\newcommand{\RX}{\intv[l]0{\minf}{\pinf}}
\newcommand{\RPP}{\intv[o]0{0}{\pinf}}
\newcommand{\emp}{\varnothing}
\newcommand{\forallmu}{\forall^{\mu}}
\newcommand{\Int}{\displaystyle\int}
\newcommand{\infconv}{\mathbin{\mbox{\small$\square$}}}
\newcommand{\pushfwd}%
{\ensuremath{\mbox{\Large$\,\triangleright\,$}}}
\DeclareMathOperator{\Argmin}{Argmin}
\newcommand{\Id}{\mathrm{Id}}
\newcommand{\moyo}[2]{\leftindex[I]^{#2}{#1}}
\DeclareMathOperator{\cran}{\overline{ran}}
\DeclareMathOperator{\ran}{ran}
\DeclareMathOperator{\cdom}{\overline{dom}}
\DeclareMathOperator{\dom}{dom}
\DeclareMathOperator{\intdom}{int\,dom}
\DeclareMathOperator{\esssup}{ess\,sup}
\DeclareMathOperator{\gra}{gra}
\DeclareMathOperator{\zer}{zer}
\DeclareMathOperator{\inte}{int}
\DeclareMathOperator{\prox}{prox}
\DeclareMathOperator{\proj}{proj}
\DeclareMathOperator{\spc}{\overline{span}}
\newcommand{\EE}{\mathsf{E}}
\newcommand{\PP}{\mathsf{P}}
\DeclareMathOperator{\PE}{\overset{\diamond}{\mathsf{E}}}
\newcommand{\proxc}[2]{{#1}\diamond{#2}}
\DeclareFontFamily{U}{mathb}{}
\DeclareFontShape{U}{mathb}{m}{n}{<-5.5> mathb5 <5.5-6.5> mathb6 
<6.5-7.5> mathb7 <7.5-8.5> mathb8 <8.5-9.5> mathb9 <9.5-11> mathb10
<11-> mathb12}{}
\DeclareSymbolFont{mathb}{U}{mathb}{m}{n}
\DeclareMathSymbol{\blackdiamond}{\mathbin}{mathb}{"0C}
\newcommand{\proxcc}[2]{{#1}\mathbin{\blackdiamond}{#2}}
\DeclareMathOperator{\rmi}{\overset{\mathord{\diamond}}{\mathsf{M}}}
\DeclareMathOperator{\rcm}{\overset{\mathord{\blackdiamond}}%
{\mathsf{M}}}
\renewcommand{\leq}{\leqslant}
\renewcommand{\geq}{\geqslant}
\newcommand{\mae}{\text{\normalfont$\mu$-a.e.}}
\renewenvironment{abstract}{%
\vspace*{-0.50cm}
\small
\quotation%
\noindent%
{\normalfont\bfseries\sffamily
\nobreak\abstractname\ }%
}{%
\endquotation%
\medskip
}
\renewcommand{\abstractname}{Abstract.}
\newcommand\keywordsname{Keywords.}
\newenvironment{keywords}
{\renewcommand\abstractname{\keywordsname}\begin{abstract}}
{\end{abstract}}
\newcommand{\email}[1]{\href{mailto:#1}{\nolinkurl{#1}}}
\renewcommand*\Affilfont{\normalfont\normalsize}
\newcommand\affilcr{\protect\\ \protect\Affilfont}
\renewcommand\AB@affilsepx{\protect\\[0.5em]}
\author[1]{Minh N. B\`ui}
\affil[1]{University of Graz
\affilcr
Department of Mathematics and Scientific Computing, NAWI Graz
\affilcr
8010 Graz, Austria
\affilcr
\email{minh.bui@uni-graz.at}
}
\author[2]{Patrick L. Combettes}
\affil[2]{North Carolina State University
\affilcr
Department of Mathematics
\affilcr
Raleigh, NC 27695, USA
\affilcr
\email{plc@math.ncsu.edu}
}
\begin{document}

\title{%
Integral Resolvent and Proximal Mixtures\thanks{%
Contact author: P. L. Combettes.
Email: \email{plc@math.ncsu.edu}.
Phone: +1 919 515 2671.
The work of P. L. Combettes was supported by the National
Science Foundation under grant CCF-2211123. 
}}

\date{~}

\maketitle

\centerline{\textit{Dedicated to the memory of Boris T. Polyak}}

\vspace{12mm}

\begin{abstract}
Using the theory of Hilbert direct integrals, we introduce and
study a monotonicity-preserving operation, termed the integral
resolvent mixture. It combines arbitrary families of monotone
operators acting on different spaces and linear operators. As a
special case, we investigate the resolvent expectation, an
operation which combines monotone operators in such a way that the
resulting resolvent is the Lebesgue expectation of the individual
resolvents. Along the same lines, we introduce an operation that
mixes arbitrary families of convex functions defined on different
spaces and linear operators to create a composite convex function.
Such constructs have so far been limited to finite families of
operators and functions. The subdifferential of the integral
proximal mixture is shown to be the integral resolvent mixture of
the individual subdifferentials. Applications to the relaxation of
systems of composite monotone inclusions are presented.
\end{abstract}

\begin{keywords}
Hilbert direct integral,
integral proximal mixture,
integral resolvent mixture,
monotone operator,
proximal expectation,
resolvent expectation.
\end{keywords}

\vspace*{-0.50cm}
\small
\quotation%
\noindent%
{\normalfont%
Communicated by Stefan Ulbrich.
\nobreak}

\newpage

\section{Introduction}

Monotone inclusions provide an effective template to model a wide
spectrum of problems in optimization and nonlinear analysis
\cite{Livre1,Brez98,Bri18b,Bord18,Acnu24,Facc03,Rock84}.
The question of
combining monotone and linear operators in a fashion that preserves
monotonicity has been a recurrent topic; see, e.g.,
\cite{Beck14,Botr10,Bric23,Brow69,Svva23}. One such construct is
the resolvent mixture \cite{Svva23}, an operation that includes in
particular the resolvent average \cite{Baus16}. It combines
finitely many monotone and linear operators in such a way that the
resolvent of the resulting operator is the sum of the individual
linearly composed resolvents. Our objective is to extend this
construct to arbitrary families of operators. Our analysis rests on
the concept of Hilbert direct integrals of families of monotone
operators proposed in \cite{Cana24}. Considering the case when the
underlying operators are subdifferentials leads us to introduce
the Hilbert direct integral of a family of convex functions, a
notion that extends proximal mixtures of finite families and, 
in particular, the proximal average.

Our main contributions are the following.
\begin{itemize}
\item
We introduce the notion of an integral resolvent mixture for
arbitrary families of monotone operators acting on different
spaces. This construction exploits the notion of
Hilbert direct integrals of set-valued and linear operators from
\cite{Cana24}. One of its salient features is that its resolvent is
the Lebesgue integral of the linearly composed resolvents of the
individual operators. A dual operation of integral resolvent
comixture is also investigated.
\item
We introduce the notion of an integral proximal mixture for
arbitrary families of functions defined on different spaces. Its
proximity operator turns out to be the Lebesgue integral of the
linearly composed proximity operators of the individual functions.
A dual operation of integral proximal comixture is also
investigated.
\item
As an instance of an integral resolvent mixture, we propose a
notion of resolvent expectation for a family of maximally monotone
operators and, likewise, of proximal expectation for a family of
functions. These notions extend those of resolvent and
proximal averages for finite families. 
\item
We apply the above tools to the relaxation of systems of 
monotone inclusions involving linear operators. Applications
fitting this framework are described and a proximal-type algorithm
is proposed.  
\end{itemize}

The paper is organized as follows. In Section~\ref{sec:2}, we set
our notation and provide necessary theoretical tools. In 
Section~\ref{sec:3}, we study the integral resolvent mixture 
of a family of monotone operators. Section~\ref{sec:4}
is dedicated to the integral proximal mixture of a family of 
functions. In Section~\ref{sec:5}, we present an application to 
systems of monotone inclusions and discuss some special cases of
interest arising in data analysis.

\section{Notation and background}
\label{sec:2}

We first present our notation, which follows \cite{Livre1}.

Let $\HH$ be a real Hilbert space with power set $2^{\HH}$, 
identity operator $\Id_{\HH}$, scalar product 
$\scal{\Cdot}{\Cdot}_{\HH}$, associated norm 
$\norm{\Cdot}_{\HH}$, and quadratic kernel
$\qq_{\HH}=\norm{\Cdot}_{\HH}^2/2$.

Let $C$ be a nonempty closed convex subset of $\HH$. Then $\proj_C$
is the projection operator onto $C$ and $N_C$ is the normal cone 
operator of $C$.

Let $T\colon\HH\to\HH$ and $\tau\in\RPP$. Then $T$ is nonexpansive
if it is $1$-Lipschitzian, $\tau$-cocoercive if 
\begin{equation}
\label{e:coco}
(\forall x\in\HH)(\forall y\in\HH)\quad
\scal{x-y}{Tx-Ty}_{\HH}\geq\tau\norm{Tx-Ty}_{\HH}^2,
\end{equation}
and $T$ is firmly nonexpansive if it is $1$-cocoercive.

Let $A\colon\HH\to 2^{\HH}$. The graph of $A$ is 
$\gra A=\menge{(x,x^*)\in\HH\times\HH}{x^*\in Ax}$, 
the inverse of $A$ is the operator 
$A^{-1}\colon\HH\to 2^{\HH}$ with graph
$\gra A^{-1}=\menge{(x^*,x)\in\HH\times\HH}{x^*\in Ax}$,
the domain of $A$ is $\dom A=\menge{x\in\HH}{Ax\neq\emp}$, 
the range of $A$ is $\ran A=\bigcup_{x\in\dom A}Ax$, 
the set of zeros of $A$ is
$\zer A=\menge{x\in\HH}{0\in Ax}$, 
the resolvent of $A$ is $J_A=(\Id_{\HH}+A)^{-1}$,
and the Yosida approximation of $A$ of index $\gamma\in\RPP$ is
\begin{equation}
\moyo{A}{\gamma}=A\infconv\brk1{\gamma^{-1}\Id_{\HH}}
=\brk1{A^{-1}+\gamma\Id_{\HH}}^{-1}
=\dfrac{\Id_{\HH}-J_{\gamma A}}{\gamma}.
\end{equation}
Suppose that $A$ is monotone. Then $A$ is maximally monotone if any
extension of $\gra A$ is no longer monotone in $\HH\oplus\HH$. In
this case, $\dom J_A=\HH$ and $J_A$ is firmly nonexpansive.

Let $f\colon\HH\to\RXX$ and set 
$\dom f=\menge{x\in\HH}{f(x)<\pinf}$. The Moreau envelope of $f$ is 
\begin{equation}
f\infconv\qq_{\HH}\colon\HH\to\RXX\colon x\mapsto
\inf_{y\in\HH}\brk1{f(y)+\qq_{\HH}(x-y)}
\end{equation}
and the conjugate of $f$ is 
\begin{equation}
f^*\colon\HH\to\RXX\colon x^*\mapsto\sup_{x\in\HH}
\bigl(\scal{x}{x^*}_{\HH}-f(x)\bigr).
\end{equation}
Now suppose that $f\in\Gamma_0(\HH)$, that is, $f$ is lower
semicontinuous, convex, and such that 
$\minf\notin f(\HH)\neq\{\pinf\}$. The subdifferential of $f$ is
the maximally monotone operator
\begin{equation}
\label{e:subdiff}
\partial f\colon\HH\to 2^{\HH}\colon x\mapsto\menge{x^*\in\HH}
{(\forall y\in\HH)\,\,\scal{y-x}{x^*}_{\HH}+f(x)\leq f(y)}
\end{equation}
and the proximity operator $\prox_f=J_{\partial f}$ of $f$ maps
every $x\in\HH$ to the unique minimizer of the function
$\HH\to\RX\colon y\mapsto f(y)+\qq_{\HH}(x-y)$.

Finally, given a measure space $(\Omega,\FF,\mu)$,
the symbol $\forallmu$ means ``for $\mu$-almost every''
\cite{Sch93b}.

\begin{definition}[\protect{\cite[Definition~1.1]{Svva23}}]
\label{d:s1}
Let $\HH$ and $\XS$ be real Hilbert spaces,
let $A\colon\HH\to 2^{\HH}$,
and let $L\colon\XS\to\HH$ be linear and bounded.
The \emph{resolvent composition} of $A$ with $L$ is the operator
$\proxc{L}{A}\colon\XS\to 2^{\XS}$ given by 
\begin{equation}
\label{e:s1}
\proxc{L}{A}=\brk!{L^*\circ J_A\circ L}^{-1}-\Id_{\XS}
\end{equation}
and the \emph{resolvent cocomposition} of $A$ with $L$ is 
$\proxcc{L}{A}=(\proxc{L}{A^{-1}})^{-1}$.
\end{definition}

\begin{definition}[\protect{\cite[Definition~1.4]{Svva23}}]
\label{d:s2}
Let $\HH$ and $\XS$ be real Hilbert spaces, let 
$f\colon\HH\to\RXX$, and let $L\colon\XS\to\HH$ be linear and
bounded. The \emph{proximal composition} of $f$ with $L$ is the
function $\proxc{L}{f}\colon\XS\to\RXX$ given by 
\begin{equation}
\label{e:s2}
\proxc{L}{f}=\brk!{(f^*\infconv\qq_{\mathcal{H}})\circ L}^*-
\qq_{\XS},
\end{equation}
and the \emph{proximal cocomposition} of $f$ with $L$ is 
$\proxcc{L}{f}=(\proxc{L}{f^*})^*$.
\end{definition}

Here are some notation and facts regarding integration in Hilbert
spaces. Let $(\Omega,\FF,\mu)$ be a complete $\sigma$-finite
measure space and let $\HS$ be a separable real Hilbert
space. For every $p\in\intv[r]{1}{\pinf}$, set
\begin{equation}
\mathscr{L}^p\brk!{\Omega,\FF,\mu;\HS}
=\Menge3{x\colon\Omega\to\HS}{x\,\,
\text{is $(\FF,\BE_{\HS})$-measurable and}\,\,
\int_{\Omega}\norm{x(\omega)}_{\HS}^p\,\mu(d\omega)<\pinf},
\end{equation}
where $\BE_{\HS}$ is the Borel $\sigma$-algebra of $\HS$.
The Lebesgue (also called Bochner) integral of a
mapping $x\in\mathscr{L}^1(\Omega,\FF,\mu;\HS)$ is denoted by
$\int_{\Omega}x(\omega)\mu(d\omega)$; see
\cite[Section~V.{\S}7]{Sch93b} for background. 
We denote by $L^p\brk{\Omega,\FF,\mu;\HS}$ the space of 
equivalence classes of $\mae$ equal mappings in 
$\mathscr{L}^p\brk{\Omega,\FF,\mu;\HS}$.

\begin{lemma}
\label{l:5}
Let $(\Omega,\FF,\mu)$ be a complete $\sigma$-finite measure space,
let $\HS$ be a separable real Hilbert space,
and let $x\in\mathscr{L}^1(\Omega,\FF,\mu;\HS)$.
Then the following hold:
\begin{enumerate}
\item
\label{l:5i}
\textup{\cite[Th\'eor\`eme~5.7.13]{Sch93b}}
$\norm{\int_{\Omega}x(\omega)\mu(d\omega)}_{\HS}\leq
\int_{\Omega}\norm{x(\omega)}_{\HS}\,\mu(d\omega)$.
\item
\label{l:5ii}
Let $\mathsf{x}^*\in\HS$. Then the function
$\Omega\to\RR\colon\omega\mapsto
\scal{x(\omega)}{\mathsf{x}^*}_{\HS}$ is $\mu$-integrable and
\begin{equation}
\int_{\Omega}\scal{x(\omega)}{\mathsf{x}^*}_{\HS}\,\mu(d\omega)
=\scal3{\int_{\Omega}x(\omega)\mu(d\omega)}{\mathsf{x}^*}_{\HS}.
\end{equation}
\item
\label{l:5iii}
Suppose that $\mu$ is a probability measure. Then
\begin{equation}
\norm3{\int_{\Omega}x(\omega)\mu(d\omega)}_{\HS}^2\leq
\int_{\Omega}\norm{x(\omega)}_{\HS}^2\,\mu(d\omega).
\end{equation}
\end{enumerate}
\end{lemma}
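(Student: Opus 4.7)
Part (\ref{l:5i}) is quoted from Schwartz, so no work is required there.

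For (\ref{l:5ii}), the plan is to view $\phi\colon\HS\to\RR\colon \mathsf{y}\mapsto\scal{\mathsf{y}}{\mathsf{x}^*}_{\HS}$ as a continuous linear functional. Since $\phi$ is Borel measurable and $x$ is $(\FF,\BE_{\HS})$-measurable, $\phi\circ x$ is $\FF$-measurable, while the Cauchy--Schwarz bound $|\phi(x(\omega))|\leq\norm{\mathsf{x}^*}_{\HS}\norm{x(\omega)}_{\HS}$ combined with $x\in\mathscr{L}^1(\Omega,\FF,\mu;\HS)$ yields $\mu$-integrability of $\phi\circ x$. The identity then reduces to the standard fact that continuous linear functionals commute with Bochner integration, which I would first verify on $\HS$-valued simple functions (where both sides collapse to finite sums involving the values of $\phi$) and then transfer to the general case by approximating $x$ in $\mathscr{L}^1$ by simple functions $x_n$ and passing to the limit using continuity of $\phi$ and the norm bound from part (\ref{l:5i}) applied to $x_n-x$.

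For (\ref{l:5iii}), I would set $\mathsf{y}=\int_{\Omega}x(\omega)\mu(d\omega)$ and apply (\ref{l:5ii}) with $\mathsf{x}^*=\mathsf{y}$, combined with the pointwise Cauchy--Schwarz inequality in $\HS$, to obtain
\begin{equation*}
\norm{\mathsf{y}}_{\HS}^2
=\int_{\Omega}\scal{x(\omega)}{\mathsf{y}}_{\HS}\,\mu(d\omega)
\leq\norm{\mathsf{y}}_{\HS}
\int_{\Omega}\norm{x(\omega)}_{\HS}\,\mu(d\omega).
\end{equation*}
Dividing by $\norm{\mathsf{y}}_{\HS}$ (the case $\mathsf{y}=0$ being trivial) gives an alternative derivation of (\ref{l:5i}), and squaring together with the Cauchy--Schwarz inequality against the constant function $1$---which is square-integrable precisely because $\mu$ is a probability measure---delivers
\begin{equation*}
\norm{\mathsf{y}}_{\HS}^2\leq
\biggl(\int_{\Omega}\norm{x(\omega)}_{\HS}\,\mu(d\omega)\biggr)^2
\leq\int_{\Omega}1\,\mu(d\omega)\cdot
\int_{\Omega}\norm{x(\omega)}_{\HS}^2\,\mu(d\omega)
=\int_{\Omega}\norm{x(\omega)}_{\HS}^2\,\mu(d\omega),
\end{equation*}
the inequality being trivial when $\int_{\Omega}\norm{x(\omega)}_{\HS}^2\,\mu(d\omega)=\pinf$.

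The main obstacle is really only the Bochner commutation step used in (\ref{l:5ii}), but this is routine density-of-simple-functions material; once it is in hand, (\ref{l:5iii}) follows from two invocations of Cauchy--Schwarz, with the probability hypothesis intervening solely to ensure that $1\in\mathscr{L}^2(\Omega,\FF,\mu;\RR)$.
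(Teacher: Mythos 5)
Your proposal is correct and follows essentially the same route as the paper. Part~(ii) is exactly the statement that continuous linear functionals commute with the Bochner integral, which the paper simply cites (Schwartz, Th\'eor\`eme~5.8.16, applied to $\scal{\Cdot}{\mathsf{x}^*}_{\HS}$) and you prove by the standard simple-function density argument; part~(iii) is the same computation as in the paper, namely the Cauchy--Schwarz inequality against the constant function $1$ combined with $\mu(\Omega)=1$, your intermediate rederivation of~(i) from~(ii) being harmless but not needed.
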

\begin{proof}
\ref{l:5ii}:
Apply \cite[Th\'eor\`eme~5.8.16]{Sch93b} with the continuous linear
functional $L=\scal{\Cdot}{\mathsf{x}^*}_{\HS}$.

\ref{l:5iii}:
We derive from \ref{l:5i} and the Cauchy--Schwarz inequality that
\begin{equation}
\norm3{\int_{\Omega}x(\omega)\mu(d\omega)}_{\HS}^2
\leq\abs3{\int_{\Omega}1_{\Omega}(\omega)
\norm{x(\omega)}_{\HS}\,\mu(d\omega)}^2
\leq\mu(\Omega)
\int_{\Omega}\norm{x(\omega)}_{\HS}^2\,\mu(d\omega),
\end{equation}
which concludes the proof.
\end{proof}

\begin{notation}
\label{n:62}
Let $(\Omega,\FF,\mu)$ be a complete $\sigma$-finite measure
space, let $\XS$ and $\HS$ be separable real Hilbert spaces,
and let $(\TW)_{\omega\in\Omega}$ be a family of operators from
$\XS$ to $\HS$ such that, for every $\mathsf{x}\in\XS$,
the mapping $\Omega\to\HS\colon\omega\mapsto\TW\mathsf{x}$
is $(\FF,\BE_{\HS})$-measurable. Let
\begin{equation}
\mathsf{D}=\Menge3{\mathsf{x}\in\XS}{
\int_{\Omega}\norm{\TW\mathsf{x}}_{\HS}\,\mu(d\omega)<\pinf}.
\end{equation}
Then
\begin{equation}
\int_{\Omega}\TW\mu(d\omega)\colon\mathsf{D}\to\HS\colon
\mathsf{x}\mapsto\int_{\Omega}\TW\mathsf{x}\,\mu(d\omega).
\end{equation}
In particular, if $\mu$ is a probability measure, then
\begin{equation}
\label{e:vk}
\EE(\TW)_{\omega\in\Omega}=\int_{\Omega}\TW\mu(d\omega)
\end{equation}
is the \emph{$\mu$-expectation} of the family
$(\TW)_{\omega\in\Omega}$.
\end{notation}

The following setup describes the main functional setting employed
in the paper. As in \cite{Cana24}, it relies on the notion of a
Hilbert direct integral of Hilbert spaces \cite{Dixm69}.

\begin{assumption}
\label{a:1}
Let $(\Omega,\FF,\mu)$ be a complete $\sigma$-finite measure space,
let $(\HW)_{\omega\in\Omega}$ be a family of real Hilbert spaces,
and let $\prod_{\omega\in\Omega}\HW$ be the usual real vector
space of mappings $x$ defined on $\Omega$ such that
$(\forall\omega\in\Omega)$ $x(\omega)\in\HW$.
Let $((\HW)_{\omega\in\Omega},\mathfrak{G})$ be an
\emph{$\FF$-measurable vector field of real Hilbert spaces},
that is, $\mathfrak{G}$ is a vector subspace of
$\prod_{\omega\in\Omega}\HW$ which satisfies the following:
\begin{enumerate}[label={\normalfont[\Alph*]}]
\item
\label{a:1A}
For every $x\in\mathfrak{G}$, the function $\Omega\to\RR\colon
\omega\mapsto\norm{x(\omega)}_{\HW}$ is
$\FF$-measurable.
\item
\label{a:1B}
For every $x\in\prod_{\omega\in\Omega}\HW$,
\begin{equation}
\brk[s]!{\;(\forall y\in\mathfrak{G})\;\;
\Omega\to\RR\colon
\omega\mapsto\scal{x(\omega)}{y(\omega)}_{\HW}
\,\,\text{is $\FF$-measurable}\;}
\quad\Rightarrow\quad x\in\mathfrak{G}.
\end{equation}
\item
\label{a:1C}
There exists a sequence $(e_n)_{n\in\NN}$ in $\mathfrak{G}$ such
that $(\forall\omega\in\Omega)$
$\spc\{e_n(\omega)\}_{n\in\NN}=\HW$.
\end{enumerate}
Set
\begin{equation}
\label{e:4a5t}
\GG=\Menge3{x\in\mathfrak{G}}{\int_{\Omega}
\norm{x(\omega)}_{\HW}^2\mu(d\omega)<\pinf},
\end{equation}
and let $\HH$ be the real Hilbert space of equivalence classes of
$\mae$ equal mappings in $\GG$ equipped with the scalar product
\begin{equation}
\label{e:y9d2}
\scal{\Cdot}{\Cdot}_{\HH}\colon\HH\times\HH\to\RR\colon
(x,y)\mapsto\int_{\Omega}
\scal{x(\omega)}{y(\omega)}_{\HW}\mu(d\omega),
\end{equation}
where we adopt the common practice of designating by $x$ both an
equivalence class in $\HH$ and a representative of it in $\GG$.
We write
\begin{equation}
\HH=\leftindex^{\mathfrak{G}}{\int}_{\Omega}^{\oplus}
\HW\mu(d\omega)
\end{equation}
and call $\HH$ the \emph{Hilbert direct integral of
$((\HW)_{\omega\in\Omega},\mathfrak{G})$} \cite{Dixm69}.
\end{assumption}

Here are some instances of Hilbert direct integrals \cite{Cana24}.

\begin{example}
\label{ex:1i+}
Let $p\in\NN\smallsetminus\{0\}$,
let $(\alpha_k)_{1\leq k\leq p}$ be a family in $\RPP$,
let $(\HS_k)_{1\leq k\leq p}$ be separable real Hilbert spaces,
let $\mathfrak{G}=\HS_1\times\cdots\times\HS_p$ be
the usual Cartesian product vector space, and set
\begin{equation}
\Omega=\set{1,\ldots,p},\quad
\FF=2^{\set{1,\ldots,p}},\quad\text{and}\quad
\brk!{\forall k\in\set{1,\ldots,p}}\;\;
\mu\brk!{\set{k}}=\alpha_k.
\end{equation}
Then $((\HS_k)_{1\leq k\leq p},\mathfrak{G})$ is an
$\FF$-measurable vector field of real Hilbert spaces
and $\leftindex^{\mathfrak{G}}{\int}_{\Omega}^{\oplus}
\HW\mu(d\omega)$ is the weighted Hilbert direct sum
of $(\HS_k)_{1\leq k\leq p}$, namely the Hilbert space
obtained by equipping $\mathfrak{G}$ with the scalar product
\begin{equation}
\brk!{(\mathsf{x}_k)_{1\leq k\leq p},
(\mathsf{y}_k)_{1\leq k\leq p}}\mapsto
\sum_{k=1}^p\alpha_k
\scal{\mathsf{x}_k}{\mathsf{y}_k}_{\HS_k}.
\end{equation}
\end{example}

\begin{example}
\label{ex:1ii}
Let $(\alpha_k)_{k\in\NN}$ be a family in $\RPP$,
let $(\HS_k)_{k\in\NN}$ be separable real Hilbert spaces,
let $\mathfrak{G}=\prod_{k\in\NN}\HS_k$, and set
\begin{equation}
\Omega=\NN,\quad\FF=2^{\NN},\quad\text{and}\quad
(\forall k\in\NN)\;\;\mu\brk!{\set{k}}=\alpha_k.
\end{equation}
Then $((\HS_k)_{k\in\NN},\mathfrak{G})$ is an
$\FF$-measurable vector field of real Hilbert spaces and
$\leftindex^{\mathfrak{G}}{\int}_{\Omega}^{\oplus}
\HW\mu(d\omega)$ is the Hilbert space obtained
by equipping the vector space
\begin{equation}
\mathfrak{H}=\Menge3{(\mathsf{x}_k)_{k\in\NN}
\in\mathfrak{G}}{\sum_{k\in\NN}\alpha_k
\norm{\mathsf{x}_k}_{\HS_k}^2<\pinf}
\end{equation}
with the scalar product
\begin{equation}
\brk!{(\mathsf{x}_k)_{k\in\NN},(\mathsf{y}_k)_{k\in\NN}}
\mapsto\sum_{k\in\NN}\alpha_k
\scal{\mathsf{x}_k}{\mathsf{y}_k}_{\HS_k}.
\end{equation}
\end{example}

\begin{example}
\label{ex:1iii}
Let $(\Omega,\FF,\mu)$ be a complete $\sigma$-finite measure space,
let $\HS$ be a separable real Hilbert space, and set
\begin{equation}
\brk[s]!{\;(\forall\omega\in\Omega)\;\;\HW=\HS\;}
\quad\text{and}\quad
\mathfrak{G}=\menge{x\colon\Omega\to\HS}{
x\,\,\text{is $(\FF,\BE_{\HS})$-measurable}}.
\end{equation}
Then $((\HW)_{\omega\in\Omega},\mathfrak{G})$ is an
$\FF$-measurable vector field of real Hilbert spaces and
\begin{equation}
\leftindex^{\mathfrak{G}}{\int}_{\Omega}^{\oplus}
\HW\mu(d\omega)
=L^2\brk!{\Omega,\FF,\mu;\HS}.
\end{equation}
\end{example}

\section{Integral resolvent mixtures}
\label{sec:3}

Our setting hinges on the following assumptions.

\begin{assumption}
\label{a:2}
Assumption~\ref{a:1} and the following are in force:
\begin{enumerate}[label={\normalfont[\Alph*]}]
\item
\label{a:2a}
For every $\omega\in\Omega$, $\AW\colon\HW\to 2^{\HW}$
is maximally monotone.
\item
\label{a:2b}
For every $x\in\GG$, the mapping $\omega\mapsto
J_{\AW}x(\omega)$ lies in $\mathfrak{G}$.
\item
\label{a:2c}
$\dom\leftindex^{\mathfrak{G}}{\int}_{\Omega}^{\oplus}
\AW\mu(d\omega)\neq\emp$, where
\begin{equation}
\label{e:8t0r}
\leftindex^{\mathfrak{G}}{\int}_{\Omega}^{\oplus}
\AW\mu(d\omega)\colon\HH\to 2^{\HH}\colon x\mapsto
\menge{x^*\in\HH}{(\forallmu\omega\in\Omega)\,\,
x^*(\omega)\in\AW x(\omega)}
\end{equation}
is the \emph{Hilbert direct integral of the operators
$(\AW)_{\omega\in\Omega}$ relative to $\mathfrak{G}$}
\cite{Cana24}.
\end{enumerate}
\end{assumption}

\begin{assumption}
\label{a:4}
Assumption~\ref{a:1} and the following are in force:
\begin{enumerate}[label={\normalfont[\Alph*]}]
\item
\label{a:4a}
$\XS$ is a separable real Hilbert space.
\item
\label{a:4b}
For every $\omega\in\Omega$, $\LW\colon\XS\to\HW$ is
linear and bounded.
\item
\label{a:4c}
For every $\mathsf{x}\in\XS$, the mapping
$\mathfrak{e}_{\LS}\mathsf{x}\colon
\omega\mapsto\LW\mathsf{x}$ lies in $\mathfrak{G}$.
\item
\label{a:4d}
$0<\int_{\Omega}\norm{\LW}^2\mu(d\omega)\leq 1$.
\end{enumerate}
\end{assumption}

The main purpose of this section is to study the following objects
which mix families of monotone and linear operators.

\begin{definition}
\label{d:8}
Suppose that Assumptions~\ref{a:2} and \ref{a:4} are in force. 
The \emph{integral resolvent mixture} of
$(\AW)_{\omega\in\Omega}$ and $(\LW)_{\omega\in\Omega}$ is
\begin{equation}
\label{e:100}
\rmi(\LW,\AW)_{\omega\in\Omega}
=\brk3{\int_{\Omega}\brk!{\LW^*\circ J_{\AW}\circ\LW}
\mu(d\omega)}^{-1}-\Id_{\XS},
\end{equation}
and the \emph{integral resolvent comixture} of
$(\AW)_{\omega\in\Omega}$ and $(\LW)_{\omega\in\Omega}$ is
\begin{equation}
\label{e:101}
\rcm(\LW,\AW)_{\omega\in\Omega}
=\brk2{\rmi\brk!{\LW,\AW^{-1}}_{\omega\in\Omega}}^{-1}.
\end{equation}
\end{definition}

We start off with some properties of integrals of composite
Lipschitzian operators.

\begin{proposition}
\label{p:4}
Suppose that Assumption~\ref{a:1} is in force.
Let $\XS$ be a separable real Hilbert space,
let $\beta\colon\Omega\to\RPP$ be $\FF$-measurable and such that
$\esssup\beta<\pinf$, and for every $\omega\in\Omega$,
let $\TW\colon\HW\to\HW$ be $\beta(\omega)$-Lipschitzian and let
$\LW\colon\XS\to\HW$ be linear and bounded. Suppose that the
following are satisfied:
\begin{enumerate}[label={\normalfont[\Alph*]}]
\item
\label{p:4a}
For every $x\in\GG$, the mapping
$\omega\mapsto\TW x(\omega)$ lies in $\mathfrak{G}$.
\item
\label{p:4b}
There exists $z\in\GG$ such that the mapping
$\omega\mapsto\TW z(\omega)$ lies in $\GG$.
\item
\label{p:4c}
For every $\mathsf{x}\in\XS$, the mapping
$\mathfrak{e}_{\LS}\mathsf{x}\colon
\omega\mapsto\LW\mathsf{x}$ lies in $\mathfrak{G}$.
\item
\label{p:4d}
$\int_{\Omega}\norm{\LW}^2\mu(d\omega)<\pinf$.
\end{enumerate}
Set
\begin{equation}
\TS=\int_{\Omega}\brk!{\LW^*\circ\TW\circ\LW}\mu(d\omega)
\quad\text{and}\quad
\tau=\int_{\Omega}\norm{\LW}^2\beta(\omega)\mu(d\omega).
\end{equation}
Then the following hold:
\begin{enumerate}
\item
\label{p:4i}
$\TS\colon\XS\to\XS$ is well defined and $\tau$-Lipschitzian.
\item
\label{p:4ii}
Define $L\colon\XS\to\HH\colon\mathsf{x}\mapsto
\mathfrak{e}_{\LS}\mathsf{x}$ and
$T=\leftindex^{\mathfrak{G}}{\int}_{\Omega}^{\oplus}
\TW\mu(d\omega)$. Then $L$ is well defined, linear, and bounded
with $\norm{L}\leq\sqrt{\int_{\Omega}\norm{\LW}^2\mu(d\omega)}$,
and $\TS=L^*\circ T\circ L$.
\item
\label{p:4iii}
Suppose that, for every $\omega\in\Omega$,
$\TW$ is $1/\beta(\omega)$-cocoercive and $\LW\neq 0$.
Then the following are satisfied:
\begin{enumerate}
\item
\label{p:4iiia}
$\TS$ is $1/\tau$-cocoercive.
\item
\label{p:4iiib}
$\menge{\int_{\Omega}\LW^*(\TW x(\omega))\mu(d\omega)}{
x\in\GG}\subset\cran\TS$.
\item
\label{p:4iiic}
$\inte\menge{\int_{\Omega}\LW^*(\TW x(\omega))\mu(d\omega)}{
x\in\GG}\subset\ran\TS$.
\end{enumerate}
\end{enumerate}
\end{proposition}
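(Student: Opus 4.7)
The plan is to treat parts~\ref{p:4i}, \ref{p:4ii}, \ref{p:4iiia}, \ref{p:4iiib}, \ref{p:4iiic} in that order, with the bulk of the work going into a Tikhonov-style regularization argument for~\ref{p:4iiib} based on the cocoercivity of the direct integral $T$.

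For~\ref{p:4i}, I first verify that $\omega\mapsto\LW^*\TW\LW\mathsf{x}$ is Bochner integrable for every $\mathsf{x}\in\XS$. Scalar measurability against any $\mathsf{y}\in\XS$ reduces to measurability of $\omega\mapsto\scal{\LW\mathsf{y}}{\TW\LW\mathsf{x}}_{\HW}$, which follows from Assumption~\ref{a:1}\ref{a:1A}--\ref{a:1B} combined with hypotheses~\ref{p:4a} and~\ref{p:4c}; separability of $\XS$ then upgrades this to strong measurability. Integrability comes from $\|\TW\LW\mathsf{x}\|_{\HW}\leq\|\TW z(\omega)\|_{\HW}+\beta(\omega)\|\LW\mathsf{x}-z(\omega)\|_{\HW}$, using $\esssup\beta<\pinf$ together with hypotheses~\ref{p:4b} and~\ref{p:4d}. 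The Lipschitz bound $\tau$ on $\TS$ is then immediate from Lemma~\ref{l:5}\ref{l:5i} and the pointwise estimate $\|\LW^*(\TW\LW\mathsf{x}-\TW\LW\mathsf{y})\|_{\XS}\leq\|\LW\|^2\beta(\omega)\|\mathsf{x}-\mathsf{y}\|_{\XS}$.

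For~\ref{p:4ii}, $L\mathsf{x}=\mathfrak{e}_{\LS}\mathsf{x}$ lies in $\GG$ because $\int_{\Omega}\|\LW\mathsf{x}\|_{\HW}^2\,\mu(d\omega)\leq\|\mathsf{x}\|_{\XS}^2\int_{\Omega}\|\LW\|^2\,\mu(d\omega)$, which also furnishes the operator-norm bound. Lemma~\ref{l:5}\ref{l:5ii} identifies the adjoint as $L^*y=\int_{\Omega}\LW^*y(\omega)\,\mu(d\omega)$, and substituting $y=TL\mathsf{x}$ (the class of $\omega\mapsto\TW\LW\mathsf{x}$) gives $L^*TL\mathsf{x}=\TS\mathsf{x}$. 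For~\ref{p:4iiia}, each $\LW^*\TW\LW$ is $(\beta(\omega)\|\LW\|^2)^{-1}$-cocoercive by composing the $1/\beta(\omega)$-cocoercivity of $\TW$ with $\|\LW^*u\|^2\leq\|\LW\|^2\|u\|^2$. Setting $\mathsf{u}_\omega=\LW^*(\TW\LW\mathsf{x}-\TW\LW\mathsf{y})$, Cauchy--Schwarz in integral form with the weight pair $(\|\mathsf{u}_\omega\|/\sqrt{\beta(\omega)\|\LW\|^2},\sqrt{\beta(\omega)\|\LW\|^2})$ gives $(\int\|\mathsf{u}_\omega\|d\mu)^2\leq\tau\int\|\mathsf{u}_\omega\|^2/(\beta(\omega)\|\LW\|^2)d\mu$; combining this with Lemma~\ref{l:5}\ref{l:5i} and the integrated cocoercivity inequality $\scal{\mathsf{x}-\mathsf{y}}{\TS\mathsf{x}-\TS\mathsf{y}}_{\XS}\geq\int\|\mathsf{u}_\omega\|^2/(\beta(\omega)\|\LW\|^2)d\mu$ produces the $(1/\tau)$-cocoercivity of $\TS$.

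The hard part will be~\ref{p:4iiib}, since $L\XS$ need not be dense in $\HH$, which blocks any naive continuity argument. My strategy is Tikhonov regularization: given $x\in\GG$ and $\mathsf{x}^*=L^*Tx$, set $\mathsf{x}_\lambda=(\TS+\lambda\Id_{\XS})^{-1}\mathsf{x}^*$ for $\lambda>0$ (well-defined by strong monotonicity) and $v_\lambda=Tx-TL\mathsf{x}_\lambda$, so that $\lambda\mathsf{x}_\lambda=L^*v_\lambda$. The crux is that $T=\leftindex^{\mathfrak{G}}{\int}_{\Omega}^{\oplus}\TW\mu(d\omega)$ is $(\esssup\beta)^{-1}$-cocoercive on $\HH$, which follows by integrating the pointwise cocoercivity of $\TW$. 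Pairing this cocoercivity at $(x,L\mathsf{x}_\lambda)$ with the identity $\scal{L\mathsf{x}_\lambda}{v_\lambda}_{\HH}=\scal{\mathsf{x}_\lambda}{L^*v_\lambda}_{\XS}=\lambda\|\mathsf{x}_\lambda\|_{\XS}^2$ yields
\[
\lambda\|\mathsf{x}_\lambda\|_{\XS}^2+(\esssup\beta)^{-1}\|v_\lambda\|_{\HH}^2\leq\scal{x}{v_\lambda}_{\HH}\leq\|x\|_{\HH}\|v_\lambda\|_{\HH}.
\]
Separately, this gives $\|v_\lambda\|_{\HH}\leq(\esssup\beta)\|x\|_{\HH}$ and $\lambda\|\mathsf{x}_\lambda\|_{\XS}^2\leq(\esssup\beta)\|x\|_{\HH}^2$. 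The payoff is that $\|\lambda\mathsf{x}_\lambda\|_{\XS}=\sqrt{\lambda}\,\sqrt{\lambda\|\mathsf{x}_\lambda\|_{\XS}^2}\leq\sqrt{\lambda\esssup\beta}\,\|x\|_{\HH}\to 0$ as $\lambda\downarrow 0$, so $\TS\mathsf{x}_\lambda=\mathsf{x}^*-\lambda\mathsf{x}_\lambda\to\mathsf{x}^*$ strongly, placing $\mathsf{x}^*$ in $\cran\TS$. Part~\ref{p:4iiic} is then immediate: $\TS$ is maximally monotone by~\ref{p:4iiia}, so the classical identity $\inte\ran\TS=\inte\cran\TS$ combined with~\ref{p:4iiib} gives $\inte\menge{\int_{\Omega}\LW^*(\TW x(\omega))\mu(d\omega)}{x\in\GG}\subset\inte\cran\TS=\inte\ran\TS\subset\ran\TS$.
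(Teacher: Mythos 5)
Your parts \ref{p:4i}, \ref{p:4ii}, and \ref{p:4iiia} are correct and essentially follow the paper's route (the paper handles the measurability/integrability bookkeeping by citing \cite{Cana24} and proves \ref{p:4iiia} by passing to the reweighted probability measure $\Xi\mapsto\int_{\Xi}\norm{\LW}^2\beta(\omega)\tau^{-1}\mu(d\omega)$ and Lemma~\ref{l:5}\ref{l:5iii}, which is the same weighted Cauchy--Schwarz estimate you use). Your proof of \ref{p:4iiib} is a genuine and correct departure: the paper writes $\TS=L^*\circ T\circ L$ with $T$ cocoercive and invokes \cite[Theorem~5]{Penn01} for both \ref{p:4iiib} and \ref{p:4iiic}, whereas you reprove the ``closure'' half of that theorem from scratch via Tikhonov regularization; the inequality $\lambda\norm{\mathsf{x}_\lambda}_{\XS}^2+(\esssup\beta)^{-1}\norm{v_\lambda}_{\HH}^2\leq\scal{x}{v_\lambda}_{\HH}$ does force $\lambda\mathsf{x}_\lambda\to\mathsf{0}$ and hence $\TS\mathsf{x}_\lambda\to\mathsf{x}^*$. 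That buys self-containedness at the cost of length.

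The gap is in \ref{p:4iiic}. The ``classical identity'' $\inte\ran\TS=\inte\cran\TS$ for a maximally monotone operator is false in infinite dimensions. Take $\TS=B$ a compact, injective, positive self-adjoint operator with non-closed range on an infinite-dimensional $\XS$: it is Lipschitzian and $1/\norm{B}$-cocoercive (since $B-\norm{B}^{-1}B^2=B^{1/2}(\Id_{\XS}-\norm{B}^{-1}B)B^{1/2}$ is positive), yet $\ran B$ is a proper dense subspace, so $\inte\ran B=\emp$ while $\inte\cran B=\XS$. What Rockafellar's result gives is $\inte\ran A=\inte(\operatorname{conv}\ran A)$; passing to the closure can create interior points, so \ref{p:4iiic} does not follow from \ref{p:4iiib} in the way you claim. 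To repair it along your own lines you must run the interior half of the Br\'ezis--Haraux/Pennanen argument: for $\mathsf{x}^*\in\inte L^*(\ran T)$, show that the net $(\mathsf{x}_\lambda)_{\lambda>0}$ remains \emph{bounded} as $\lambda\downarrow 0$ (every $\mathsf{y}^*$ in a ball around $\mathsf{x}^*$ is of the form $L^*Ty$, and the same cocoercivity pairing at $(y,L\mathsf{x}_\lambda)$ yields a bound on $\scal{\mathsf{y}^*-\mathsf{x}^*}{\mathsf{x}_\lambda}_{\XS}$ that is uniform in $\lambda$, hence a bound on $\norm{\mathsf{x}_\lambda}_{\XS}$); then extract a weak cluster point $\mathsf{x}_0$ and use the sequential weak--strong closedness of $\gra\TS$ (maximal monotonicity, from \ref{p:4i} and \ref{p:4iiia}) together with $\TS\mathsf{x}_\lambda\to\mathsf{x}^*$ to conclude $\mathsf{x}^*\in\TS\mathsf{x}_0\subset\ran\TS$. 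This is exactly the content of \cite[Theorem~5]{Penn01}, which the paper cites instead.
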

\begin{proof}
Observe that, by \cite[Proposition~3.12(i)]{Cana24},
the function $\Omega\to\RR\colon\omega\mapsto\norm{\LW}$
is $\FF$-measurable and, by \ref{p:4d},
\begin{equation}
\tau
\leq(\esssup\beta)\int_{\Omega}\norm{\LW}^2\mu(d\omega)
<\pinf.
\end{equation}
We set $(\forall\omega\in\Omega)$ $\RW=\LW^*\circ\TW\circ\LW$.

\ref{p:4i}:
Let $\mathsf{x}\in\XS$. It results from
\cite[Proposition~3.12(ii)]{Cana24} that the mapping
$\omega\mapsto\LW\mathsf{x}$ lies in $\GG$. In turn, \ref{p:4a}
ensures that the mapping $\omega\mapsto\TW(\LW\mathsf{x})$ lies in
$\mathfrak{G}$. Therefore, we deduce from \ref{p:4c} and
\cite[Lemma~2.2(i)]{Cana24} that, for every $\mathsf{y}\in\XS$,
the function $\Omega\to\RR\colon\omega\mapsto
\scal{\mathsf{y}}{\RW\mathsf{x}}_{\XS}
=\scal{\LW\mathsf{y}}{\TW(\LW\mathsf{x})}_{\HW}$ is
$\FF$-measurable. Thus, since $(\Omega,\FF,\mu)$ is a complete
$\sigma$-finite measure space and $\XS$ is separable, we infer
from \cite[Th\'eor\`eme~5.6.24]{Sch93b} that the mapping
$\Omega\to\XS\colon\omega\mapsto\RW\mathsf{x}$ is
$(\FF,\BE_{\XS})$-measurable. Next, since $\esssup\beta<\pinf$, it
follows from \ref{p:4a}, \ref{p:4b}, and
\cite[Proposition~3.4(i)]{Cana24} that, for every $x\in\GG$,
the mapping $\omega\mapsto\TW x(\omega)$ lies in $\GG$;
in particular,
$\int_{\Omega}\norm{\TW\mathsf{0}}_{\HW}^2\mu(d\omega)<\pinf$.
Hence, because
\begin{align}
\label{e:906w}
(\forall\mathsf{y}\in\XS)(\forall\omega\in\Omega)\quad
\norm{\RW\mathsf{x}-\RW\mathsf{y}}_{\XS}
&\leq\norm{\LW}\,
\norm{\TW(\LW\mathsf{x})-\TW(\LW\mathsf{y})}_{\HW}
\nonumber\\
&\leq\norm{\LW}\beta(\omega)
\norm{\LW\mathsf{x}-\LW\mathsf{y}}_{\HW}
\nonumber\\
&\leq\norm{\LW}^2\beta(\omega)\norm{\mathsf{x}-\mathsf{y}}_{\XS},
\end{align}
we derive from the triangle and Cauchy--Schwarz inequalities that
\begin{align}
\int_{\Omega}\norm{\RW\mathsf{x}}_{\XS}\,\mu(d\omega)
&\leq\int_{\Omega}\norm{\RW\mathsf{x}-\RW\mathsf{0}}_{\XS}\,
\mu(d\omega)+
\int_{\Omega}\norm{\RW\mathsf{0}}_{\XS}\,\mu(d\omega)
\nonumber\\
&\leq\tau\norm{\mathsf{x}}_{\XS}+
\int_{\Omega}\norm{\LW}\,\norm{\TW\mathsf{0}}_{\HW}\mu(d\omega)
\nonumber\\
&\leq\tau\norm{\mathsf{x}}_{\XS}+
\sqrt{\int_{\Omega}\norm{\LW}^2\mu(d\omega)}
\sqrt{\int_{\Omega}\norm{\TW\mathsf{0}}_{\HW}^2\mu(d\omega)}
\nonumber\\
&<\pinf.
\end{align}
Thus, \cite[Th\'eor\`eme~5.7.21]{Sch93b} implies that
$\TS\colon\XS\to\XS$ is well defined. Moreover, by virtue of
\eqref{e:906w} and Lemma~\ref{l:5}\ref{l:5i},
$\TS$ is $\tau$-Lipschitzian.

\ref{p:4ii}:
Thanks to \cite[Items (ii) and (v) in Proposition~3.12]{Cana24},
$L\colon\XS\to\HH$ is a well-defined bounded linear operator
with adjoint
\begin{equation}
\label{e:vdn1}
L^*\colon\HH\to\XS\colon
x^*\mapsto\int_{\Omega}\LW^*x^*(\omega)\mu(d\omega)
\end{equation}
and $\norm{L}\leq\sqrt{\int_{\Omega}\norm{\LW}^2\mu(d\omega)}$.
On the other hand, \cite[Proposition~3.4(i)]{Cana24}
asserts that $T\colon\HH\to\HH$ and that, for every $x\in\HH$,
a representative of $Tx$ in $\GG$ is the mapping
$\omega\mapsto\TW x(\omega)$. Altogether, for every
$\mathsf{x}\in\XS$, because $\omega\mapsto\LW\mathsf{x}$ is a
representative of $L\mathsf{x}$ in $\GG$, we deduce that
\begin{equation}
L^*\brk!{T(L\mathsf{x})}
=\int_{\Omega}\LW^*\brk!{\TW(\LW\mathsf{x})}\mu(d\omega)
=\TS\mathsf{x},
\end{equation}
as announced.

\ref{p:4iiia}:
Take $\mathsf{x}\in\XS$ and $\mathsf{y}\in\XS$.
Define an $\FF$-measurable function on $\Omega$ by
$\alpha\colon\Omega\to\RPP\colon\omega\mapsto
\norm{\LW}^2\beta(\omega)/\tau$
and a probability measure $\PP$ on $\FF$ by
$\PP\colon\Xi\mapsto\int_{\Xi}\alpha(\omega)\mu(d\omega)$.
Then we derive from items \ref{l:5ii} and \ref{l:5iii} of
Lemma~\ref{l:5} together with
\cite[Th\'eor\`eme~5.10.13]{Sch93b} that
\begin{align}
\scal{\mathsf{x}-\mathsf{y}}{\TS\mathsf{x}-\TS\mathsf{y}}_{\XS}
&=\int_{\Omega}
\scal!{\mathsf{x}-\mathsf{y}}{
\LW^*\brk!{\TW(\LW\mathsf{x})}-
\LW^*\brk!{\TW(\LW\mathsf{y})}}_{\XS}\,\mu(d\omega)
\nonumber\\
&=\int_{\Omega}\scal!{\LW\mathsf{x}-\LW\mathsf{y}}{
\TW(\LW\mathsf{x})-\TW(\LW\mathsf{y})}_{\HW}\mu(d\omega)
\nonumber\\
&\geq\int_{\Omega}\frac{1}{\beta(\omega)}
\norm!{\TW(\LW\mathsf{x})-\TW(\LW\mathsf{y})}_{\HW}^2\mu(d\omega)
\nonumber\\
&\geq\int_{\Omega}\frac{1}{\norm{\LW}^2\beta(\omega)}\norm!{
\LW^*\brk!{\TW(\LW\mathsf{x})}-
\LW^*\brk!{\TW(\LW\mathsf{y})}}_{\XS}^2\,\mu(d\omega)
\nonumber\\
&=\frac{1}{\tau}\int_{\Omega}\norm2{\frac{1}{\alpha(\omega)}
(\RW\mathsf{x}-\RW\mathsf{y})}_{\XS}^2\PP(d\omega)
\nonumber\\
&\geq\frac{1}{\tau}\norm3{\int_{\Omega}\frac{1}{\alpha(\omega)}
(\RW\mathsf{x}-\RW\mathsf{y})\PP(d\omega)}_{\XS}^2
\nonumber\\
&=\frac{1}{\tau}\norm3{\int_{\Omega}\brk!{
\RW\mathsf{x}-\RW\mathsf{y}}\mu(d\omega)}_{\XS}^2
\nonumber\\
&=\frac{1}{\tau}\norm{\TS\mathsf{x}-\TS\mathsf{y}}_{\XS}^2.
\end{align}

\ref{p:4iiib} and \ref{p:4iiic}:
Define $L$ and $T$ as in \ref{p:4ii}, and recall that
$\TS=L^*\circ T\circ L$. In the light of
\cite[Corollary~20.28]{Livre1}, \ref{p:4i} and \ref{p:4iiia} imply
that $\TS$ is maximally monotone. At the same time, we deduce from
\cite[Proposition~3.4(ii)]{Cana24} that $T\colon\HH\to\HH$ is
cocoercive. Thus, it results from \eqref{e:vdn1},
\cite[Example~25.20(i)]{Livre1}, and \cite[Theorem~5]{Penn01} that
\begin{equation}
\Menge3{\int_{\Omega}\LW^*\brk!{\TW x(\omega)}\mu(d\omega)}{
x\in\GG}
=L^*(\ran T)
\subset\cran\brk!{L^*\circ T\circ L}
=\cran\TS
\end{equation}
and that
\begin{equation}
\inte\Menge3{\int_{\Omega}\LW^*\brk!{\TW x(\omega)}\mu(d\omega)}{
x\in\GG}
=\inte L^*(\ran T)
\subset\ran\brk!{L^*\circ T\circ L}
=\ran\TS,
\end{equation}
which completes the proof.
\end{proof}

The main properties of integral resolvent mixtures can now be laid
out.

\begin{theorem}
\label{t:23}
Suppose that Assumptions~\ref{a:2} and \ref{a:4} are in force. 
Set
\begin{equation}
\mathsf{W}=\rmi(\LW,\AW)_{\omega\in\Omega}
\quad\text{and}\quad
\mathsf{C}=\rcm(\LW,\AW)_{\omega\in\Omega}.
\end{equation}
Then the following hold: 
\begin{enumerate}
\item
\label{t:23i}
$\mathsf{W}^{-1}=\rcm(\LW,\AW^{-1})_{\omega\in\Omega}$ and
$\mathsf{C}^{-1}=\rmi(\LW,\AW^{-1})_{\omega\in\Omega}$.
\item
\label{t:23ii}
$\mathsf{W}$ and $\mathsf{C}$ are maximally monotone.
\item
\label{t:23iii}
$\mathsf{C}=(\Id_{\XS}+\int_{\Omega}
(\LW^*\circ J_{\AW}\circ\LW-\LW^*\circ\LW)\mu(d\omega))^{-1}
-\Id_{\XS}$.
\item
\label{t:23iv}
Suppose that $\mu$ is a probability measure and that,
for every $\omega\in\Omega$, $\LW$ is an isometry. Then
$\mathsf{W}=\mathsf{C}$.
\item
\label{t:23v}
$J_{\mathsf{W}}=\int_{\Omega}
(\LW^*\circ J_{\AW}\circ\LW)\mu(d\omega)$.
\item
\label{t:23vi}
$J_{\mathsf{C}}=\Id_{\XS}+\int_\Omega
(\LW^*\circ J_{\AW}\circ\LW-\LW^*\circ\LW)\mu(d\omega)$.
\item
\label{t:23vii}
$\mathsf{W}\infconv\Id_{\XS}=
\Id_{\XS}-\int_{\Omega}(\LW^*\circ(\AW^{-1}\infconv\Id_{\HW})
\circ\LW)\mu(d\omega)$.
\item
\label{t:23viii}
$\mathsf{C}\infconv\Id_{\XS}=
\int_{\Omega}(\LW^*\circ
(\AW\infconv\Id_{\HW})\circ\LW)\mu(d\omega)$.
\item
\label{t:23viiibis}
$\zer\mathsf{C}=\zer\int_{\Omega}(\LW^*\circ
(\AW\infconv\Id_{\HW})\circ\LW)\mu(d\omega)$.
\item
\label{t:23ix}
$\cdom\mathsf{W}
=\overline{\menge{\int_{\Omega}\LW^*x^*(\omega)\mu(d\omega)}{
x^*\in\HH\,\,\text{and}\,\,(\forallmu\omega\in\Omega)\,\,
x^*(\omega)\in\dom\AW}}$.
\item
\label{t:23x}
$\cran\mathsf{C}
=\overline{\menge{\int_{\Omega}\LW^*x^*(\omega)\mu(d\omega)}{
x^*\in\HH\,\,\text{and}\,\,(\forallmu\omega\in\Omega)\,\,
x^*(\omega)\in\ran\AW}}$.
\item
\label{t:23x+}
$\intdom\mathsf{W}=\inte\menge{\int_{\Omega}\LW^*(J_{\AW}x(\omega))
\mu(d\omega)}{x\in\HH}$.
\item
\label{t:23x++}
$\inte\ran\mathsf{C}=\inte
\menge{\int_{\Omega}\LW^*(J_{\AW^{-1}}x(\omega))
\mu(d\omega)}{x\in\HH}$.
\item
\label{t:23xi}
Suppose that, for every $\omega\in\Omega$, $\AW$ is nonexpansive
with $\dom\AW=\HW$. Then $\mathsf{C}$ is nonexpansive.
\item
\label{t:23xii}
Let $\tau\in\RPP$, set $\delta=(\tau+1)/\int_{\Omega}
\norm{\LW}^2\mu(d\omega)-1$, and suppose that, for every
$\omega\in\Omega$, $\AW$ is $\tau$-cocoercive with $\dom\AW=\HW$.
Then $\mathsf{C}$ is $\delta$-cocoercive.
\end{enumerate}
\end{theorem}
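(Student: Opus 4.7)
The plan is to rewrite $J_{\mathsf{C}}$ from item~\ref{t:23vi} as $\Id_{\XS}$ minus an integral operator involving $J_{\AW^{-1}}$, to apply Proposition~\ref{p:4} to that integral, and then to translate the resulting cocoercivity into cocoercivity of $\mathsf{C}$ via a short resolvent identity.

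First, since each $\AW$ is $\tau$-cocoercive with $\dom\AW=\HW$, it is single-valued and $(1/\tau)$-Lipschitzian, and a direct argument (using Moreau's decomposition $J_{\AW}+J_{\AW^{-1}}=\Id_{\HW}$ together with the fact that $\AW^{-1}$ is $\tau$-strongly monotone on $\ran\AW$) shows that $J_{\AW^{-1}}$ is $(1+\tau)$-cocoercive on $\HW$, hence $1/(1+\tau)$-Lipschitzian. Substituting $J_{\AW}=\Id_{\HW}-J_{\AW^{-1}}$ into item~\ref{t:23vi} transforms it into
\begin{equation}
J_{\mathsf{C}}=\Id_{\XS}-\TS,\quad\text{where}\quad
\TS=\int_{\Omega}\brk!{\LW^*\circ J_{\AW^{-1}}\circ\LW}\mu(d\omega).
\end{equation}
I would then invoke Proposition~\ref{p:4} with $\TW=J_{\AW^{-1}}$ and the constant function $\beta\equiv 1/(1+\tau)$. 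Assumption~\ref{a:2}\ref{a:2b} combined with $J_{\AW^{-1}}=\Id_{\HW}-J_{\AW}$ yields hypothesis~\ref{p:4a}; the existence required in hypothesis~\ref{p:4b} follows from Assumption~\ref{a:2}\ref{a:2c} (if $x^*(\omega)\in\AW x(\omega)$ $\mae$, then $z=x+x^*$ satisfies $J_{\AW^{-1}}z(\omega)=x^*(\omega)$); hypotheses~\ref{p:4c}--\ref{p:4d} are Assumption~\ref{a:4}\ref{a:4c}--\ref{a:4d}. Part~(iii)(a) of Proposition~\ref{p:4} then delivers that $\TS\colon\XS\to\XS$ is $c$-cocoercive with
\begin{equation}
c=\brk3{\int_{\Omega}\frac{\norm{\LW}^2}{1+\tau}\mu(d\omega)}^{-1}
=\frac{1+\tau}{\int_{\Omega}\norm{\LW}^2\mu(d\omega)}
=\delta+1,
\end{equation}
and Assumption~\ref{a:4}\ref{a:4d} ensures $c\geq 1+\tau>1$, so in particular $\delta>0$.

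Finally, to pass from cocoercivity of $\TS$ to cocoercivity of $\mathsf{C}$, note that $\mathsf{C}$ is maximally monotone by item~\ref{t:23ii}, so $J_{\mathsf{C}}=\Id_{\XS}-\TS$ is defined on all of $\XS$, and since $\TS$ is strictly contractive ($c>1$), $J_{\mathsf{C}}$ is injective. For $u,v\in\dom\mathsf{C}$, write $u=J_{\mathsf{C}}p$ and $v=J_{\mathsf{C}}q$; then $\mathsf{C}u=p-u$, $\mathsf{C}v=q-v$, and $\mathsf{C}u-\mathsf{C}v=\TS p-\TS q$, whence
\begin{equation}
\scal{u-v}{\mathsf{C}u-\mathsf{C}v}_{\XS}
=\scal{p-q}{\TS p-\TS q}_{\XS}-\norm{\TS p-\TS q}_{\XS}^2
\geq(c-1)\norm{\mathsf{C}u-\mathsf{C}v}_{\XS}^2
=\delta\norm{\mathsf{C}u-\mathsf{C}v}_{\XS}^2.
\end{equation}
The most delicate step is the bookkeeping needed to verify the measurability and square-integrability hypotheses of Proposition~\ref{p:4} for $J_{\AW^{-1}}$ (and, if needed, a minor reduction to the positive-measure set $\{\omega\in\Omega\mid\LW\neq 0\}$ so that the pointwise condition $\LW\neq 0$ holds); but both reduce routinely to the analogous facts for $J_{\AW}$ that already underpin items~\ref{t:23v}--\ref{t:23vi}.
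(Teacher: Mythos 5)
Your proposal addresses only item \ref{t:23xii} of the theorem (and takes items \ref{t:23ii} and \ref{t:23vi} as given), so as a proof of the full statement it is very far from complete; the remaining fourteen items, which the paper derives by identifying $\mathsf{W}=\proxc{L}{A}$ and $\mathsf{C}=\proxcc{L}{A}$ for $A=\leftindex^{\mathfrak{G}}{\int}_{\Omega}^{\oplus}\AW\mu(d\omega)$ and $L\colon\mathsf{x}\mapsto\mathfrak{e}_{\LS}\mathsf{x}$ and then invoking the results of \cite{Svva23} on resolvent compositions, are untouched.

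For item \ref{t:23xii} itself, your argument is correct but follows a genuinely different route. The paper stays ``upstairs'': it shows that the direct integral $A$ is $\tau$-cocoercive on $\HH$, notes $\norm{L}\leq 1<\sqrt{\tau+1}$, and applies \cite[Proposition~4.8]{Svva23} to $\mathsf{C}=\proxcc{L}{A}$ to get cocoercivity with constant $(\tau+1)\norm{L}^{-2}-1\geq\delta$. You instead work ``downstairs'' in $\XS$: you write $J_{\mathsf{C}}=\Id_{\XS}-\TS$ with $\TS=\int_{\Omega}(\LW^*\circ J_{\AW^{-1}}\circ\LW)\mu(d\omega)$, feed the family $(J_{\AW^{-1}})_{\omega\in\Omega}$ (each $(1+\tau)$-cocoercive, hence $1/(1+\tau)$-Lipschitzian) into Proposition~\ref{p:4}\ref{p:4iiia} to get that $\TS$ is $c$-cocoercive with $c=\delta+1$, and then redo by hand the standard equivalence between cocoercivity of $\Id_{\XS}-J_{\mathsf{C}}$ and cocoercivity of $\mathsf{C}$. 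Your verification of hypotheses \ref{p:4a}--\ref{p:4d} for $\TW=J_{\AW^{-1}}=\Id_{\HW}-J_{\AW}$ is sound (in particular the choice $z=x+x^*$ from Assumption~\ref{a:2}\ref{a:2c} for \ref{p:4b}), and the restriction to $\menge{\omega\in\Omega}{\LW\neq 0}$ is harmless since the complement contributes nothing to either integral. What your route buys is self-containedness: it uses only Proposition~\ref{p:4} and elementary resolvent algebra instead of the external citation. What it loses is brevity, and it obscures the slightly sharper constant $(\tau+1)\norm{L}^{-2}-1$ that the paper's argument yields. One small point to tighten: your final inequality establishes $\delta$-cocoercivity only on $\dom\mathsf{C}=\ran(\Id_{\XS}-\TS)$, whereas the paper's notion \eqref{e:coco} of cocoercivity is for operators defined on all of $\XS$; you should add that, since $\TS$ is $1/c$-Lipschitzian with $1/c<1$, the map $\Id_{\XS}-\TS$ is surjective by the Banach fixed point theorem, so $\dom\mathsf{C}=\XS$ and $\mathsf{C}$ is single-valued everywhere. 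This is a one-line patch, not a substantive gap.
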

\begin{proof}
Set
\begin{equation}
A=\leftindex^{\mathfrak{G}}{\int}_{\Omega}^{\oplus}
\AW\mu(d\omega).
\end{equation}
Then \cite[Theorem~3.8(i)]{Cana24} states that $A$ is maximally
monotone, and \cite[Theorem~3.8(ii)(a)]{Cana24} asserts that,
for every $x\in\GG$, the mapping $\omega\mapsto
J_{\AW}x(\omega)$ lies in $\GG$ and
\begin{equation}
J_A=\leftindex^{\mathfrak{G}}{\Int}_{\Omega}^{\oplus}
J_{\AW}\mu(d\omega).
\end{equation}
Therefore, by Assumption~\ref{a:4}\ref{a:4d} and
items \ref{p:4i} and \ref{p:4iiia} of Proposition~\ref{p:4},
\begin{equation}
\int_{\Omega}\brk!{\LW^*\circ J_{\AW}\circ\LW}\mu(d\omega)
\colon\XS\to\XS\,\,\text{is a well-defined firmly nonexpansive
operator},
\end{equation}
which confirms that $\mathsf{W}$ is well defined.
Additionally, it follows from Proposition~\ref{p:4}\ref{p:4ii} and
Assumption~\ref{a:4}\ref{a:4d} that
\begin{equation}
\label{e:rsmp}
L\colon\XS\to\HH\colon\mathsf{x}\mapsto
\mathfrak{e}_{\LS}\mathsf{x}\,\,\text{is a well-defined
bounded linear operator such that}\,\,\norm{L}\leq 1,
\end{equation}
and
\begin{equation}
\label{e:py6w}
(\forall\mathsf{x}\in\XS)\quad
L^*\brk!{J_A(L\mathsf{x})}
=\int_{\Omega}\LW^*\brk!{J_{\AW}(\LW\mathsf{x})}\mu(d\omega).
\end{equation}
Moreover, the adjoint of $L$ is given by
\cite[Proposition~3.12(v)]{Cana24}
\begin{equation}
\label{e:dvn2}
L^*\colon\HH\to\XS\colon
x^*\mapsto\int_{\Omega}\LW^*x^*(\omega)\mu(d\omega).
\end{equation}
Likewise, appealing to \cite[Proposition~3.7]{Cana24},
we deduce that $\mathsf{C}$ is well defined and
\begin{equation}
\label{e:py6c}
(\forall\mathsf{x}\in\XS)\quad
L^*\brk!{J_{A^{-1}}(L\mathsf{x})}
=\int_{\Omega}\LW^*\brk!{J_{\AW^{-1}}(\LW\mathsf{x})}\mu(d\omega).
\end{equation}
Hence, by virtue of Definition~\ref{d:s1},
\begin{equation}
\label{e:kvfp}
\mathsf{W}=\proxc{L}{A}\quad\text{and}\quad
\mathsf{C}=\proxcc{L}{A}.
\end{equation}

\ref{t:23i}:
A consequence of \eqref{e:100} and \eqref{e:101}.

\ref{t:23ii}:
In the light of \cite[Theorem~4.5(i)--(ii)]{Svva23},
the claim follows from \eqref{e:rsmp} and \eqref{e:kvfp}.

\ref{t:23iii}:
A consequence of \eqref{e:py6w}, \eqref{e:dvn2}, \eqref{e:kvfp},
and \cite[Proposition~4.1(ii)]{Svva23}.

\ref{t:23iv}:
By \eqref{e:y9d2} and \eqref{e:rsmp},
\begin{equation}
(\forall\mathsf{x}\in\XS)\quad
\norm{L\mathsf{x}}_{\HH}^2
=\int_{\Omega}\norm{\LW\mathsf{x}}_{\HW}^2\mu(d\omega)
=\int_{\Omega}\norm{\mathsf{x}}_{\XS}^2\,\mu(d\omega)
=\mu(\Omega)\norm{\mathsf{x}}_{\XS}^2
=\norm{\mathsf{x}}_{\XS}^2,
\end{equation}
which shows that $L$ is an isometry. Consequently, the conclusion
follows from \eqref{e:kvfp} and
\cite[Proposition~4.1(iii)]{Svva23}.

\ref{t:23v}:
An immediate consequence of \eqref{e:100}.

\ref{t:23vi}:
An immediate consequence of \ref{t:23iii}.

\ref{t:23vii}:
This follows from \eqref{e:kvfp},
\cite[Proposition~4.1(xiv)]{Svva23}, and \eqref{e:py6w}.

\ref{t:23viii}:
This follows from \eqref{e:kvfp},
\cite[Proposition~4.1(xv)]{Svva23}, and \eqref{e:py6c}.

\ref{t:23viiibis}:
Use \ref{t:23ii}, \ref{t:23viii}, and
\cite[Proposition~23.38]{Livre1}.

\ref{t:23ix}:
Set $U=\menge{x\in\HH}{(\forallmu\omega\in\Omega)\,\,
x(\omega)\in\dom\AW}$. Then $\overline{U}=\cdom A$
\cite[Theorem~3.8(iii)]{Cana24}.
Hence, \cite[Theorem~4.5(vi)]{Svva23} implies that
\begin{equation}
\cdom\mathsf{W}
=\overline{L^*(\dom A)}
=\overline{L^*\brk!{\cdom A}}
=\overline{L^*(\overline{U})}
=\overline{L^*(U)}.
\end{equation}
This and \eqref{e:dvn2} yield the desired identity.

\ref{t:23x}:
Combine \eqref{e:101}, \ref{t:23ix}, and the fact that
$(\forall\omega\in\Omega)$ $\dom\AW^{-1}=\ran\AW$.

\ref{t:23x+}:
Use \eqref{e:100} and Proposition~\ref{p:4}\ref{p:4iiic}.

\ref{t:23x++}:
A consequence of \eqref{e:101} and \ref{t:23x+}.

\ref{t:23xi}:
It follows from \cite[Theorem~3.8(v)(a)]{Cana24} that
\begin{equation}
\text{for every $x\in\GG$, the mapping
$\omega\mapsto\proj_{\AW x(\omega)}\mathsf{0}=\AW x(\omega)$ lies
in $\mathfrak{G}$}.
\end{equation}
Hence, \cite[Proposition~3.4(i)]{Cana24}
implies that $A$ is nonexpansive with $\dom A=\HH$.
Hence, it follows from \cite[Proposition~4.9]{Svva23} and
\eqref{e:kvfp} that $\mathsf{C}$ is nonexpansive.

\ref{t:23xii}:
We argue as in \ref{t:23xi} to deduce that
$A\colon\HH\to\HH$ is $\tau$-cocoercive.
On the other hand, \eqref{e:rsmp} ensures that
$\norm{L}<\sqrt{\tau+1}$. Thus, it follows from
\eqref{e:kvfp}, \cite[Proposition~4.8]{Svva23},
and Proposition~\ref{p:4}\ref{p:4ii} that
$\mathsf{C}=\proxcc{L}{A}$ is cocoercive with constant
$(\tau+1)\norm{L}^{-2}-1\geq\delta$.
\end{proof}

\begin{remark}
The motivation for calling $\rmi(\LW,\AW)_{\omega\in\Omega}$ an
integral resolvent mixture comes from
Theorem~\ref{t:23}\ref{t:23v}.
\end{remark}

Let us provide some examples of integral resolvent mixtures.

\begin{example}
\label{ex:100}
Consider the setting of Example~\ref{ex:1i+}. Then \eqref{e:100}
becomes
\begin{equation}
\label{e:62}
\rmi(\LS_k,\AS_k)_{1\leq k\leq p}
=\brk4{\sum_{k=1}^p\alpha_k\LS_k^*\circ
J_{\AS_k}\circ\LS_k}^{-1}-\Id_{\XS},
\end{equation}
which is the \emph{resolvent mixture} introduced in
\cite[Example~3.4]{Svva23}.
\end{example}

\begin{example}
\label{ex:2005}
Let $(\Omega,\FF,\mu)$ be a complete $\sigma$-finite measure space
and let $(\phi_{\omega})_{\omega\in\Omega}$
be a family in $\Gamma_0(\RR)$ such that the function
$\Omega\times\RR\to\RX\colon(\omega,\mathsf{x})\mapsto
\phi_{\omega}(\mathsf{x})$ is $\FF\otimes\BE_{\RR}$-measurable
and $(\forall\omega\in\Omega)$
$\phi_{\omega}\geq\phi_{\omega}(0)=0$.
Further, let $\XS$ be a separable real Hilbert space and let 
$e\in L^2\brk{\Omega,\FF,\mu;\XS}$ be such that 
$0<\int_\Omega\norm{e(\omega)}_{\XS}^2\,\mu(d\omega)\leq 1$. Set
\begin{equation}
(\forall\omega\in\Omega)\quad
\AW=\partial\phi_{\omega}\quad\text{and}\quad
\LW=\scal{\Cdot}{e(\omega)}_{\XS}.
\end{equation}
Then
\begin{equation}
\label{e:68}
\rmi(\LW,\AW)_{\omega\in\Omega}
=\brk3{\int_{\Omega}\brk!{\prox_{\phi_{\omega}}
\scal{\Cdot}{e(\omega)}_{\XS}}e(\omega)\mu(d\omega)}^{-1}
-\Id_{\XS}.
\end{equation}
For instance, suppose that, for every $\omega\in\Omega$, 
$\phi_{\omega}$ is the support function of a closed interval
$\mathsf{C}_{\omega}$ in $\RR$ containing $0$, with
$\delta_{\omega}=\inf\mathsf{C}_{\omega}$ and
$\rho_{\omega}=\sup\mathsf{C}_{\omega}$. Now set
\begin{equation}
\mathsf{W}=\rmi(\LW,\AW)_{\omega\in\Omega}
\quad\text{and}\quad
(\forall\mathsf{x}\in\XS)\;\;
\begin{cases}
\Omega^-(\mathsf{x})=\menge{\omega\in\Omega}
{\scal{\mathsf{x}}{e(\omega)}_{\XS}>\rho_{\omega}}\\
\Omega_-(\mathsf{x})=\menge{\omega\in\Omega}
{\scal{\mathsf{x}}{e(\omega)}_{\XS}<\delta_{\omega}}.
\end{cases}
\end{equation}
Then
\begin{equation}
\label{e:h1qe}
(\forall\mathsf{x}\in\XS)\quad
J_{\mathsf{W}}\mathsf{x}=
\int_{\Omega^-(\mathsf{x})}\brk!{\scal{\mathsf{x}}{e(\omega)}_{\XS}
-\rho_{\omega}}e(\omega)\mu(d\omega)+
\int_{\Omega_-(\mathsf{x})}\brk!{\scal{\mathsf{x}}{e(\omega)}_{\XS}
-\delta_{\omega}}e(\omega)\mu(d\omega).
\end{equation}
This process provides a representation of $\mathsf{x}$ which
eliminates the contributions of the coefficients 
$\scal{\mathsf{x}}{e(\omega)}_{\XS}\in
\intv{\delta_{\omega}}{\rho_{\omega}}$. 
For instance, in the context of Example~\ref{ex:1ii}, if
$(e(k))_{k\in\NN}$ is an orthonormal basis and
$\mathsf{C}_k=\intv{-\rho_k}{\rho_k}$,
then $J_{\mathsf{W}}$ is known as a soft-thresholder and it has
been used extensively in data analysis \cite{Smms05,Daub04}.
\end{example}
\begin{proof}
Let $\mathfrak{G}=\menge{x\colon\Omega\to\RR}{
x\,\,\text{is $\FF$-measurable}}$ and,
for every $\omega\in\Omega$, let $\HW=\RR$.
Then, in view of Example~\ref{ex:1iii},
Assumption~\ref{a:1} is satisfied and
\begin{equation}
\HH
=\leftindex^{\mathfrak{G}}{\int}_{\Omega}^{\oplus}
\HW\mu(d\omega)
=L^2\brk!{\Omega,\FF,\mu;\RR}.
\end{equation}
Since $(\Omega,\FF,\mu)$ is complete,
we deduce from \cite[Corollary~14.34 and Exercise~14.38]{Rock09}
that, for every $x\in\mathfrak{G}$, the function
$\Omega\to\RR\colon\omega\mapsto\prox_{\phi_{\omega}}x(\omega)$
lies in $\mathfrak{G}$. Additionally,
for every $\omega\in\Omega$, since $0\in\Argmin\phi_{\omega}$,
we get $0\in\AW 0$ and $J_{\AW}0=\prox_{\phi_{\omega}}0=0$.
Hence, the family $(\AW)_{\omega\in\Omega}$ satisfies
Assumption~\ref{a:2}. Next, since $e\colon\Omega\to\XS$
is $(\FF,\BE_{\XS})$-measurable, we deduce that,
for every $\mathsf{x}\in\XS$, the mapping
$\Omega\to\RR\colon\omega\mapsto
\scal{\mathsf{x}}{e(\omega)}_{\XS}=\LW\mathsf{x}$
lies in $\mathfrak{G}$. Further,
\begin{equation}
(\forall\omega\in\Omega)\quad
\LW^*\colon\RR\to\XS\colon\mathsf{x}\mapsto\mathsf{x}e(\omega)
\end{equation}
and
\begin{equation}
\int_{\Omega}\norm{\LW}^2\mu(d\omega)
=\int_{\Omega}\norm{e(\omega)}_{\XS}^2\,\mu(d\omega)
=\norm{e}_{\HH}^2
\in\intv[l]{0}{1}.
\end{equation}
This confirms that Assumption~\ref{a:4} is satisfied.
Therefore, we obtain \eqref{e:68} by invoking \eqref{e:100}.
Next, let us establish \eqref{e:h1qe}.
Take $\mathsf{x}\in\XS$. Thanks to the $\FF$-measurability of the
function $\Omega\to\RR\colon\omega\mapsto
\prox_{\phi_{\omega}}\scal{\mathsf{x}}{e(\omega)}_{\XS}$, we
obtain
\begin{align}
\Omega^-(\mathsf{x})
&=\menge{\omega\in\Omega}{\scal{\mathsf{x}}{e(\omega)}_{\XS}-
\proj_{\mathsf{C}_{\omega}}\scal{\mathsf{x}}{e(\omega)}_{\XS}>0}
\nonumber\\
&=\menge{\omega\in\Omega}{\prox_{\phi_{\omega}}
\scal{\mathsf{x}}{e(\omega)}_{\XS}>0}
\nonumber\\
&\in\FF.
\end{align}
Likewise, $\Omega_-(\mathsf{x})\in\FF$.
On the other hand, by \cite[Example~24.34]{Livre1},
\begin{equation}
(\forall\omega\in\Omega)\quad
\prox_{\phi_{\omega}}\colon\RR\to\RR\colon\mathsf{x}\mapsto
\begin{cases}
\mathsf{x}-\rho_{\omega},&\text{if}\,\,\mathsf{x}>\rho_{\omega};\\
0,&\text{if}\,\,\mathsf{x}\in\mathsf{C}_{\omega};\\
\mathsf{x}-\delta_{\omega},&\text{if}\,\,
\mathsf{x}<\delta_{\omega}.
\end{cases}
\end{equation}
Therefore, we obtain \eqref{e:h1qe} by using
Theorem~\ref{t:23}\ref{t:23v} and the fact that
$(\forall\omega\in\Omega)$
$J_{\AW}=\prox_{\phi_{\omega}}$.
\end{proof}

Next, we define the resolvent expectation of a family of maximally
monotone operators.

\begin{definition}
\label{d:101}
Let $(\Omega,\FF,\PP)$ be a complete probability space,
let $\HS$ be a separable real Hilbert space,
and let $(\AW)_{\omega\in\Omega}$ be a family of maximally
monotone operators from $\HS$ to $2^{\HS}$.
Suppose that, for every $\mathsf{x}\in\HS$, the mapping
$\Omega\to\HS\colon\omega\mapsto J_{\AW}\mathsf{x}$
is $(\FF,\BE_{\HS})$-measurable and that
$\int_{\Omega}\norm{J_{\AW}\mathsf{0}}_{\HS}^2\,\mu(d\omega)
<\pinf$. Using the notation \eqref{e:vk}, the \emph{resolvent
expectation} of the family $(\AW)_{\omega\in\Omega}$ is 
\begin{equation}
\label{e:fr}
\PE(\AW)_{\omega\in\Omega}=\brk!{\EE
(J_{\AW})_{\omega\in\Omega}}^{-1}-\Id_{\HS}.
\end{equation}
\end{definition}

\begin{example}
Consider the measure space $(\Omega,\FF,\mu)$ of
Example~\ref{ex:1i+} with the additional assumption that
$\sum_{k=1}^p\alpha_k=1$. Let $\HS$ be a separable real Hilbert
space and let $(\AS_k)_{1\leq k\leq p}$ be
maximally monotone operators from $\HS$ to $2^{\HS}$. Then
\eqref{e:fr} becomes
\begin{equation}
\label{e:106}
\PE(\AS_k)_{1\leq k\leq p}=\brk3{\sum_{k=1}^p\alpha_k
J_{\AS_k}}^{-1}-\Id_{\HS},
\end{equation}
which is the \emph{resolvent average} studied in \cite{Baus16}.
\end{example}

Let us relate resolvent expectations to integral resolvent
mixtures.

\begin{proposition}
\label{p:101}
Consider the setting of Example~\ref{ex:1iii} with the additional
assumption that $\mu$ is a probability measure.
Let $(\AW)_{\omega\in\Omega}$ be a family of maximally
monotone operators from $\HS$ to $2^{\HS}$ such that,
for every $\mathsf{x}\in\HS$, the mapping
$\Omega\to\HS\colon\omega\mapsto J_{\AW}\mathsf{x}$
is $(\FF,\BE_{\HS})$-measurable and that
$\int_{\Omega}\norm{J_{\AW}\mathsf{0}}_{\HS}^2\,\mu(d\omega)
<\pinf$. Then
\begin{equation}
\label{e:105}
\PE(\AW)_{\omega\in\Omega}
=\rmi(\Id_{\HS},\AW)_{\omega\in\Omega}
=\rcm(\Id_{\HS},\AW)_{\omega\in\Omega}.
\end{equation}
\end{proposition}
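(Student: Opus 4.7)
The plan is to recognize the setting of Proposition~\ref{p:101} as a special instance of the framework of Definition~\ref{d:8} with $\XS=\HS$ and $\LW=\Id_{\HS}$ for every $\omega\in\Omega$. Once Assumptions~\ref{a:2} and \ref{a:4} are verified in this setup, the first equality in \eqref{e:105} will follow by comparing the definitions of $\rmi$ and $\PE$, while the second will follow from Theorem~\ref{t:23}\ref{t:23iv}.

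Verifying Assumption~\ref{a:4} is routine: $\XS=\HS$ is separable by hypothesis, each $\LW=\Id_{\HS}$ is linear and bounded with $\|\LW\|=1$, every constant map $\mathfrak{e}_{\LS}\mathsf{x}\colon\omega\mapsto\mathsf{x}$ is trivially $(\FF,\BE_{\HS})$-measurable and hence lies in $\mathfrak{G}$ as described in Example~\ref{ex:1iii}, and $\int_{\Omega}\|\LW\|^2\mu(d\omega)=\mu(\Omega)=1$. For Assumption~\ref{a:2}, item~\ref{a:2a} is built into the hypothesis, while for item~\ref{a:2c} the resolvent identity $\mathsf{0}-J_{\AW}\mathsf{0}\in\AW(J_{\AW}\mathsf{0})$ shows that $x\colon\omega\mapsto J_{\AW}\mathsf{0}$ and $x^*\colon\omega\mapsto-J_{\AW}\mathsf{0}$ satisfy $x^*(\omega)\in\AW x(\omega)$ pointwise, while the measurability and square-integrability hypotheses on $J_{\AW}\mathsf{0}$ place both mappings in $\HH=L^2(\Omega,\FF,\mu;\HS)$; consequently $x$ lies in the domain of $\leftindex^{\mathfrak{G}}{\int}_{\Omega}^{\oplus}\AW\mu(d\omega)$.

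The delicate point is Assumption~\ref{a:2}\ref{a:2b}, which demands that $\omega\mapsto J_{\AW}x(\omega)$ lie in $\mathfrak{G}$ for every $x\in\mathfrak{G}$, whereas the hypothesis provides measurability of $\omega\mapsto J_{\AW}\mathsf{x}$ only at constants. To bridge this I would use that each $J_{\AW}$ is nonexpansive and hence continuous in its argument, so $(\omega,\mathsf{x})\mapsto J_{\AW}\mathsf{x}$ is a Carath\'eodory mapping from $\Omega\times\HS$ into the separable metric space $\HS$; such a mapping is jointly measurable, so composing with any measurable $x\colon\Omega\to\HS$ yields a measurable map. This Carath\'eodory-composition step is the main technical hurdle.

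With both assumption sets verified, Definition~\ref{d:8} gives
\begin{equation*}
\rmi(\Id_{\HS},\AW)_{\omega\in\Omega}
=\Bigl(\int_{\Omega}J_{\AW}\mu(d\omega)\Bigr)^{-1}-\Id_{\HS},
\end{equation*}
and since $\mu$ is a probability measure, Notation~\ref{n:62} identifies the integral with $\EE(J_{\AW})_{\omega\in\Omega}$, which by Definition~\ref{d:101} yields $\PE(\AW)_{\omega\in\Omega}$. For the second equality, each $\LW=\Id_{\HS}$ is trivially an isometry, so Theorem~\ref{t:23}\ref{t:23iv} delivers $\rmi(\Id_{\HS},\AW)_{\omega\in\Omega}=\rcm(\Id_{\HS},\AW)_{\omega\in\Omega}$, completing the chain of equalities in \eqref{e:105}.
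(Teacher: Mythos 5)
Your proposal is correct and follows essentially the same route as the paper: verify Assumptions~\ref{a:2} and \ref{a:4} for $\LW=\Id_{\HS}$, handle Assumption~\ref{a:2}\ref{a:2b} via joint measurability of the Carath\'eodory map $(\omega,\mathsf{x})\mapsto J_{\AW}\mathsf{x}$ (the paper cites Lemma~III.14 of Castaing--Valadier for exactly this), obtain $\dom A\neq\emp$ from $-J_{\AW}\mathsf{0}\in\AW(J_{\AW}\mathsf{0})$ together with the square-integrability hypothesis, and conclude with Theorem~\ref{t:23}\ref{t:23iv}. No gaps.
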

\begin{proof}
Appealing to \cite[Lemma~III.14]{Cast77} and
the continuity of the operators $(J_{\AW})_{\omega\in\Omega}$,
we infer that the mapping
$\Omega\times\HS\to\HS\colon(\omega,\mathsf{x})\mapsto
J_{\AW}\mathsf{x}$ is
$(\FF\otimes\BE_{\HS},\BE_{\HS})$-measurable. Thus, for every
$x\in\GG$, the mapping $\Omega\to\HS\colon\omega\mapsto
J_{\AW}x(\omega)$ is $(\FF,\BE_{\HS})$-measurable, i.e., it lies
in $\mathfrak{G}$. On the other hand,
letting $A=\leftindex^{\mathfrak{G}}{\int}_{\Omega}^{\oplus}
\AW\mu(d\omega)$ and $r\colon\Omega\to\HS\colon\omega\mapsto
J_{\AW}\mathsf{0}$ yields ${-}r\in Ar$, which implies that
$\dom A\neq\emp$. Hence, it follows from \eqref{e:fr} and
\eqref{e:100} that $\PE(\AW)_{\omega\in\Omega}=
\rmi(\Id_{\HS},\AW)_{\omega\in\Omega}$,
while the identity
$\rmi(\Id_{\HS},\AW)_{\omega\in\Omega}
=\rcm(\Id_{\HS},\AW)_{\omega\in\Omega}$ follows from
Theorem~\ref{t:23}\ref{t:23iv}.
\end{proof}

By specializing Theorem~\ref{t:23} to the scenario of
Proposition~\ref{p:101}, we obtain at once the following properties
of the resolvent expectation and, in particular, those of the
resolvent average of finitely many operators studied in
\cite{Baus16}.

\begin{corollary}
\label{c:23}
Consider the setting of Definition~\ref{d:101}.
Then the following hold: 
\begin{enumerate}
\item
\label{c:23i}
$(\PE(\AW)_{\omega\in\Omega})^{-1}
=\PE(\AW^{-1})_{\omega\in\Omega}$.
\item
\label{c:23ii}
$\PE(\AW)_{\omega\in\Omega}$ is maximally monotone.
\item
\label{c:23v}
$J_{\PE(\AW)_{\omega\in\Omega}}=\EE(J_{\AW})_{\omega\in\Omega}$.
\item
\label{c:23vii}
$(\PE(\AW)_{\omega\in\Omega})\infconv\Id_{\HS}=
\EE(\AW\infconv\Id_{\HS})_{\omega\in\Omega}$.
\item
\label{c:23ix}
$\cdom\PE(\AW)_{\omega\in\Omega}
=\overline{\menge{\EE x^*}{
x^*\in L^2\brk{\Omega,\FF,\PP;\HS}\,\,\text{and}\,\,
(\forallmu\omega\in\Omega)\,\,x^*(\omega)\in\dom\AW}}$. 
\item
\label{c:23x}
$\cran\PE(\AW)_{\omega\in\Omega}=
\overline{\menge{\EE x^*}{
x^*\in L^2\brk{\Omega,\FF,\PP;\HS}\,\,\text{and}\,\,
(\forallmu\omega\in\Omega)\,\,x^*(\omega)\in\ran\AW}}$. 
\item
\label{c:23x+}
$\intdom\PE(\AW)_{\omega\in\Omega}
=\inte\menge{\EE(J_{\AW}x(\omega))_{\omega\in\Omega}}{
x\in L^2\brk{\Omega,\FF,\PP;\HS}}$.
\item
\label{c:23x++}
$\inte\ran\PE(\AW)_{\omega\in\Omega}=\inte
\menge{\EE(J_{\AW^{-1}}x(\omega))_{\omega\in\Omega}}{
x\in L^2\brk{\Omega,\FF,\PP;\HS}}$.
\item
\label{c:23xi}
Suppose that, for every $\omega\in\Omega$, 
$\AW$ is nonexpansive with $\dom\AW=\HW$. Then 
$\PE(\AW)_{\omega\in\Omega}$ is nonexpansive.
\item
\label{c:23xii}
Let $\tau\in\RPP$ and suppose that, for every
$\omega\in\Omega$, $\AW$ is $\tau$-cocoercive with $\dom\AW=\HW$.
Then $\PE(\AW)_{\omega\in\Omega}$ is $\tau$-cocoercive.
\end{enumerate}
\end{corollary}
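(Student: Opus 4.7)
The plan is to apply Theorem~\ref{t:23} in the specialized setting of Proposition~\ref{p:101}, namely, in the vector field of Example~\ref{ex:1iii} with $\XS=\HS$, with $\mu=\PP$ a probability measure, and with $\LW=\Id_{\HS}$ for every $\omega\in\Omega$. Proposition~\ref{p:101} already supplies the key identification
\begin{equation*}
\PE(\AW)_{\omega\in\Omega}
=\rmi(\Id_{\HS},\AW)_{\omega\in\Omega}
=\rcm(\Id_{\HS},\AW)_{\omega\in\Omega},
\end{equation*}
so that in the notation of Theorem~\ref{t:23} one has $\mathsf{W}=\mathsf{C}=\PE(\AW)_{\omega\in\Omega}$. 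It further verifies measurability and nonemptiness of the direct-integral domain, so only the trivial items of Assumption~\ref{a:4} remain to check: the constant mapping $\omega\mapsto\Id_{\HS}$ satisfies \ref{a:4b}--\ref{a:4c} vacuously, and $\int_{\Omega}\norm{\LW}^2\,\PP(d\omega)=\PP(\Omega)=1$ fulfills \ref{a:4d}. Under these choices, every composition $\LW^{*}\circ F_{\omega}\circ\LW$ collapses to $F_{\omega}$, and Notation~\ref{n:62} yields $\int_{\Omega}F_{\omega}\,\PP(d\omega)=\EE(F_{\omega})_{\omega\in\Omega}$.

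With the identifications in place, the proof becomes a one-to-one transcription of the relevant items of Theorem~\ref{t:23}. Specifically: \ref{c:23i} follows from Theorem~\ref{t:23}\ref{t:23i} combined with the second equality of Proposition~\ref{p:101}; \ref{c:23ii} from \ref{t:23ii}; \ref{c:23v} from \ref{t:23v}; \ref{c:23vii} from \ref{t:23viii} applied to $\mathsf{C}$; items \ref{c:23ix} and \ref{c:23x} from \ref{t:23ix} and \ref{t:23x}; items \ref{c:23x+} and \ref{c:23x++} from \ref{t:23x+} and \ref{t:23x++} (observing that the admissible representatives $x$ there range over $\HH=L^2(\Omega,\FF,\PP;\HS)$ by Example~\ref{ex:1iii}); and \ref{c:23xi} from \ref{t:23xi}.

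The only item requiring a brief numerical simplification is \ref{c:23xii}. Applying Theorem~\ref{t:23}\ref{t:23xii} yields $\delta$-cocoercivity of $\PE(\AW)_{\omega\in\Omega}=\mathsf{C}$ with
\begin{equation*}
\delta
=\frac{\tau+1}{\int_{\Omega}\norm{\LW}^2\PP(d\omega)}-1
=\frac{\tau+1}{\PP(\Omega)}-1
=\tau,
\end{equation*}
which is exactly the stated constant. The main obstacle, if any, is purely bookkeeping: one must ensure that the square-integrability of $\omega\mapsto J_{\AW}\mathsf{0}$ imposed in Definition~\ref{d:101} is indeed the correct translation of Assumption~\ref{a:2}\ref{a:2c}, but this is already encoded in the proof of Proposition~\ref{p:101} (via the zero $r\colon\omega\mapsto J_{\AW}\mathsf{0}$ of $\Id_{\HS}+A$). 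There are no substantive new calculations.
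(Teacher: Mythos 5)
Your proposal is correct and matches the paper's own treatment: the corollary is obtained precisely by specializing Theorem~\ref{t:23} to the setting of Proposition~\ref{p:101} (where $\mathsf{W}=\mathsf{C}=\PE(\AW)_{\omega\in\Omega}$, $\LW=\Id_{\HS}$, and $\int_{\Omega}\norm{\LW}^2\,\PP(d\omega)=1$), and your item-by-item mapping, including the use of Theorem~\ref{t:23}\ref{t:23viii} for \ref{c:23vii} and the computation $\delta=\tau$ for \ref{c:23xii}, is exactly what the paper intends.
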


\section{Integral proximal mixtures}
\label{sec:4}

The integral proximal mixture will be cast in the following
setting.

\begin{assumption}
\label{a:3}
Assumption~\ref{a:1} and the following are in force:
\begin{enumerate}[label={\normalfont[\Alph*]}]
\item
\label{a:3a}
For every $\omega\in\Omega$, $\fw\colon\HW\to\RX$
possesses a continuous affine minorant.
\item
\label{a:3b}
There exists $r\in\GG$ such that the function
$\omega\mapsto\fw(r(\omega))$ lies in
$\mathscr{L}^1(\Omega,\FF,\mu;\RR)$.
\item
\label{a:3c}
There exists $r^*\in\GG$ such that the function
$\omega\mapsto\fw^*(r^*(\omega))$ lies in
$\mathscr{L}^1(\Omega,\FF,\mu;\RR)$.
\item
\label{a:3d}
For every $x^*\in\GG$, the mapping
$\omega\mapsto\prox_{\fw^*}x^*(\omega)$ lies in $\mathfrak{G}$.
\end{enumerate}
\end{assumption}

\begin{definition}
\label{d:7}
Suppose that Assumptions~\ref{a:4} and \ref{a:3} are in force. 
The \emph{integral proximal mixture} of
$(\fw)_{\omega\in\Omega}$ and $(\LW)_{\omega\in\Omega}$ is
\begin{equation}
\label{e:120}
\rmi(\LW,\fw)_{\omega\in\Omega}
=\brk3{\int_{\Omega}\brk!{(\fw^*\infconv\qq_{\HW})\circ\LW}
\mu(d\omega)}^*-\qq_{\XS},
\end{equation}
and the \emph{integral proximal comixture} of
$(\fw)_{\omega\in\Omega}$ and
$(\LW)_{\omega\in\Omega}$ is
\begin{equation}
\label{e:121}
\rcm(\LW,\fw)_{\omega\in\Omega}
=\brk2{\rmi\brk!{\LW,\fw^*}_{\omega\in\Omega}}^*.
\end{equation}
\end{definition}

Item~\ref{t:25viii} below connects Definitions~\ref{d:8} and
\ref{d:7}.

\begin{theorem}
\label{t:25}
Suppose that Assumptions~\ref{a:4} and \ref{a:3} are in force.
Then the following hold:
\begin{enumerate}
\item
\label{t:25i}
$\rmi(\LW,\fw)_{\omega\in\Omega}\in\Gamma_0(\XS)$.
\item
\label{t:25ii}
$\rcm(\LW,\fw)_{\omega\in\Omega}\in\Gamma_0(\XS)$.
\item
\label{t:25iii}
Let $\mathsf{x}\in\XS$. Then 
\begin{multline*}
\hskip -9mm
\bigl(\rmi(\LW,\fw)_{\omega\in\Omega}\bigr)(\mathsf{x})\\
=\min\Menge3{\int_{\Omega}\fw^{**}\brk!{x(\omega)}\mu(d\omega)
+\qq_{\HH}(x)-\qq_{\XS}(\mathsf{x})}{x\in\HH\,\,\text{and}\,\,
\int_{\Omega}\LW^*x(\omega)\mu(d\omega)=\mathsf{x}}.
\end{multline*}
\item
\label{t:25iv}
$\cdom\rmi(\LW,\fw)_{\omega\in\Omega}=\overline{
\menge{\int_{\Omega}\LW^*x(\omega)\mu(d\omega)}{
x\in\HH\,\,\text{and}\,\,(\forallmu\omega\in\Omega)\,\,
x(\omega)\in\dom\fw^{**}}}$.
\item
\label{t:25v}
$(\rmi(\LW,\fw)_{\omega\in\Omega})^*
=\rcm(\LW,\fw^*)_{\omega\in\Omega}
=(\qq_{\XS}-\int_{\Omega}(\fw^*\infconv\qq_{\HW})
\circ\LW\,\mu(d\omega))^*-\qq_{\XS}$.
\item
\label{t:25vi}
$(\rcm(\LW,\fw)_{\omega\in\Omega})^*
=\rmi(\LW,\fw^*)_{\omega\in\Omega}$.
\item
\label{t:25vii}
$\rmi(\LW,\fw)_{\omega\in\Omega}\infconv\qq_{\XS}+
\rcm(\LW,\fw^*)_{\omega\in\Omega}\infconv\qq_{\XS}=\qq_{\XS}$.
\item
\label{t:25viii}
$\partial\rmi(\LW,\fw)_{\omega\in\Omega}=
\rmi(\LW,\partial\fw^{**})_{\omega\in\Omega}$.
\item
\label{t:25ix}
$\prox_{\rmi(\LW,\fw)_{\omega\in\Omega}}=
\int_{\Omega}\brk{\LW^*\circ\prox_{\fw^{**}}\circ\LW}
\mu(d\omega)$.
\item
\label{t:25x}
$\prox_{\rcm(\LW,\fw)_{\omega\in\Omega}}=
\Id_{\XS}-
\int_{\Omega}\brk{\LW^*\circ\prox_{\fw^*}\circ\LW}\mu(d\omega)$.
\item
\label{t:25xi}
$\rcm(\LW,\fw)_{\omega\in\Omega}\infconv\qq_{\XS}=
\int_{\Omega}(\fw^{**}\infconv\qq_{\HW})\circ\LW\,\mu(d\omega)$.
\item
\label{t:25xii}
$\Argmin\rcm(\LW,\fw)_{\omega\in\Omega}
=\Argmin
\int_{\Omega}(\fw^{**}\infconv\qq_{\HW})\circ\LW\,\mu(d\omega)$.
\item
\label{t:25xiii}
Suppose that $\mu$ is a probability measure and that,
for every $\omega\in\Omega$, $\LW$ is an isometry. Then
$\rmi(\LW,\fw)_{\omega\in\Omega}=\rcm(\LW,\fw)_{\omega\in\Omega}$.
\end{enumerate}
\end{theorem}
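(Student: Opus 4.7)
The strategy parallels the proof of Theorem~\ref{t:23}: express $\rmi(\LW,\fw)_{\omega\in\Omega}$ and $\rcm(\LW,\fw)_{\omega\in\Omega}$ as the proximal composition and cocomposition of a single function with the canonical operator $L$ of Proposition~\ref{p:4}\ref{p:4ii}, and then transfer the items from \cite{Svva23} through this identification, using the Hilbert direct integral machinery of \cite{Cana24} to handle the set-valued and variational pieces.

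The first step is to form the Hilbert direct integral of the lower semicontinuous convex envelopes, $f=\leftindex^{\mathfrak{G}}{\int}_{\Omega}^{\oplus}\fw^{**}\mu(d\omega)$, defined by $f(x)=\int_{\Omega}\fw^{**}(x(\omega))\mu(d\omega)$. Assumption~\ref{a:3}\ref{a:3a}--\ref{a:3c} (continuous affine minorants together with integrable selections $r$ and $r^*$) should yield $f\in\Gamma_0(\HH)$ with $f^*(x^*)=\int_{\Omega}\fw^*(x^*(\omega))\mu(d\omega)$ and, via the measurability hypothesis~\ref{a:3}\ref{a:3d} and \cite{Cana24}, the representations $\partial f=\leftindex^{\mathfrak{G}}{\int}_{\Omega}^{\oplus}\partial\fw^{**}\mu(d\omega)$ and $\prox_{f^*}x^*=(\omega\mapsto\prox_{\fw^*}x^*(\omega))$. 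Next, invoking Proposition~\ref{p:4}\ref{p:4ii} together with Assumption~\ref{a:4}\ref{a:4d}, the operator $L\colon\XS\to\HH\colon\mathsf{x}\mapsto\mathfrak{e}_{\LS}\mathsf{x}$ is bounded and linear with $\norm{L}\leq 1$. The decisive identity to establish is
\begin{equation}
\brk!{(f^*\infconv\qq_{\HH})\circ L}(\mathsf{x})
=\int_{\Omega}\brk!{(\fw^*\infconv\qq_{\HW})\circ\LW}(\mathsf{x})\,\mu(d\omega),
\qquad\forall\,\mathsf{x}\in\XS,
\end{equation}
which disintegrates the Moreau envelope across the direct integral. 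In view of Definition~\ref{d:s2} this yields at once
\begin{equation}
\rmi(\LW,\fw)_{\omega\in\Omega}=\proxc{L}{f}
\quad\text{and}\quad
\rcm(\LW,\fw)_{\omega\in\Omega}=\proxcc{L}{f}.
\end{equation}

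Once this identification is in place, the items follow in a fairly mechanical way. Parts \ref{t:25i}--\ref{t:25ii} come from the $\Gamma_0$-preserving property of $\proxc{L}{\Cdot}$ and $\proxcc{L}{\Cdot}$; \ref{t:25iii}--\ref{t:25iv} from the variational and domain formulas for proximal composition in \cite{Svva23}, reading $\dom f=\{x\in\HH\mid x(\omega)\in\dom\fw^{**}\text{ $\mu$-a.e.}\}$ via \cite{Cana24} and $L^*x=\int_{\Omega}\LW^*x(\omega)\mu(d\omega)$ from Proposition~\ref{p:4}; \ref{t:25v}--\ref{t:25vii} from the conjugation and Moreau-type identities in \cite[Proposition~4.1]{Svva23}; \ref{t:25viii} from the subdifferential formula $\partial(\proxc{L}{f})=\proxc{L}{\partial f}$ in \cite{Svva23} combined with the direct integral of subdifferentials cited above, so that the right-hand side becomes precisely $\rmi(\LW,\partial\fw^{**})_{\omega\in\Omega}$; \ref{t:25ix}--\ref{t:25x} from the proximity-operator formula $\prox_{\proxc{L}{f}}=L^*\circ\prox_f\circ L$ of \cite{Svva23}, using again the direct-integral representation of $\prox_f$ and $\prox_{f^*}$; \ref{t:25xi} from the Moreau-envelope formula for $\proxcc{L}{f}$; \ref{t:25xii} from \ref{t:25xi} together with the standard fact that $\Argmin$ is preserved under infimal convolution with $\qq_{\XS}$; and \ref{t:25xiii} from \cite[Proposition~4.1(iii)]{Svva23}, since the isometry assumption combined with $\mu(\Omega)=1$ makes $L$ an isometry as in the proof of Theorem~\ref{t:23}\ref{t:23iv}.

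The principal obstacle is the Moreau-envelope disintegration identity above; it cannot be read off directly from pointwise measurability but requires a measurable selection of the minimizers $\omega\mapsto y(\omega)$ achieving each $(\fw^*\infconv\qq_{\HW})(\LW\mathsf{x})$ and the verification that this selection lies in $\mathfrak{G}$ (so that the $\HH$-infimum collapses to the $\omega$-wise infimum under the integral). This is where Assumption~\ref{a:3}\ref{a:3d}—the measurability of $\omega\mapsto\prox_{\fw^*}x^*(\omega)$—does the heavy lifting, since $\prox_{\fw^*}(\LW\mathsf{x})$ is precisely that minimizer. After this, every subsequent item is a transcription of an already established property of proximal composition.
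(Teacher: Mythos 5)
Your proposal is correct and follows essentially the same route as the paper: both identify $\rmi(\LW,\fw)_{\omega\in\Omega}=\proxc{L}{f}$ and $\rcm(\LW,\fw)_{\omega\in\Omega}=\proxcc{L}{f}$ for the canonical $L$ and the integral functional $f\colon x\mapsto\int_{\Omega}\fw^{**}(x(\omega))\mu(d\omega)$ (the paper writes it as $g^*$ with $g=\int_{\Omega}^{\oplus}\fw^*\mu(d\omega)$, which is the same function), and then transfer each item from \cite{Svva23}. You also correctly locate the one genuinely technical step — the disintegration $(g\infconv\qq_{\HH})\circ L=\int_{\Omega}(\fw^*\infconv\qq_{\HW})\circ\LW\,\mu(d\omega)$ — which the paper delegates to \cite[Theorem~4.7(viii)]{Cana24} after verifying its hypotheses from Assumption~\ref{a:3}.
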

\begin{proof}
Set
\begin{equation}
\mathsf{g}=\rmi(\LW,\fw)_{\omega\in\Omega}\;\;\text{and}\;\;
\mathsf{h}=\rcm(\LW,\fw)_{\omega\in\Omega}.
\end{equation}
In the light of \cite[Propositions~13.12(ii) and
13.10(ii)]{Livre1}, we infer from
\ref{a:3a} and \ref{a:3b} of Assumption~\ref{a:3} that
$(\forall\omega\in\Omega)$ $\fw^*\in\Gamma_0(\HW)$.
Next, define $\varrho\colon\Omega\to\RR\colon\omega\mapsto
{-}\fw(r(\omega))$. Then, by Assumption~\ref{a:3}\ref{a:3b},
$\varrho\in\mathscr{L}^1(\Omega,\FF,\mu;\RR)$. Additionally,
$(\forall\omega\in\Omega)$
$\fw^*\geq\scal{\Cdot}{r(\omega)}+\varrho(\omega)$.
Hence, we conclude that the family
$(\fw^*)_{\omega\in\Omega}$ satisfies the assumptions of
\cite[Theorem~4.7]{Cana24} and therefore that the family
$(\partial\fw^{**})_{\omega\in\Omega}$
satisfies Assumption~\ref{a:2}.
Let us now check that the family
$(\fw^{*})_{\omega\in\Omega}$ satisfies Assumption~\ref{a:3} by
using the mapping $r$ to fulfill \ref{a:3c}. To this end, we need
to show that the function
$\varphi\colon\omega\mapsto\fw^{**}(r(\omega))$ lies in
$\mathscr{L}^1(\Omega,\FF,\mu;\RR)$. First, it follows from
\cite[Theorem~4.7(ix)]{Cana24} that $\varphi$ is $\FF$-measurable.
Further, since, for every $\omega\in\Omega$, $\fw^{**}\leq\fw$, 
Assumption~\ref{a:3}\ref{a:3b}
implies that $\varphi$ is majorized by an integrable function.
Finally, since 
\begin{equation}
(\forall\omega\in\Omega)\quad
\fw^{**}\geq\scal{\cdot}{r^*(\omega)}_{\HW}
-\fw^*\brk1{r^*(\omega)},
\end{equation}
Assumption~\ref{a:3}\ref{a:3c} implies that $\varphi$ is
minorized by an integrable function.
Next, we observe that \cite[Theorem~4.7(i)--(ii)]{Cana24} assert
that
\begin{equation}
g\colon\HH\to\RX\colon
x^*\mapsto\int_{\Omega}\fw^*\brk!{x^*(\omega)}\mu(d\omega)
\end{equation}
is a well-defined function in $\Gamma_0(\HH)$.
Moreover, by \cite[Theorem~4.7(viii)]{Cana24},
\begin{equation}
\label{e:u1ng}
g\infconv\qq_{\HH}\colon\HH\to\RR\colon x^*\mapsto
\int_{\Omega}(\fw^*\infconv\qq_{\HW})\brk!{x^*(\omega)}\mu(d\omega)
\end{equation}
and, by \cite[Theorem~4.7(ix)]{Cana24},
\begin{equation}
\label{e:fdvn}
g^*\colon\HH\to\RX\colon x\mapsto
\int_{\Omega}\fw^{**}\brk!{x(\omega)}\mu(d\omega).
\end{equation}
We also recall from \eqref{e:rsmp} that
\begin{equation}
\label{e:uvhz}
L\colon\XS\to\HH\colon\mathsf{x}\mapsto
\mathfrak{e}_{\LS}\mathsf{x}\,\,\text{is a well-defined
bounded linear operator with}\,\,\norm{L}\leq 1.
\end{equation}

\ref{t:25i}:
We deduce from \eqref{e:120},
Moreau's biconjugation theorem \cite[Corollary~13.38]{Livre1},
Definition~\ref{d:s2}, and
\cite[Example~3.6(ii)]{Svva23} that
\begin{equation}
\label{e:nn5j}
\rmi(\LW,\fw)_{\omega\in\Omega}
=\brk!{\brk{g\infconv\qq_{\HH}}\circ L}^*-\qq_{\XS}
=\proxc{L}{g^*}
\in\Gamma_0(\XS).
\end{equation}

\ref{t:25ii}:
It follows from \ref{t:25i}, Definition~\ref{d:s2}, and
\cite[Example~3.10(i)]{Svva23} that
\begin{equation}
\label{e:4cpy}
\mathsf{h}
=\brk2{\rmi\brk!{\LW,\fw^*}_{\omega\in\Omega}}^*
=\brk!{\proxc{L}{g^{**}}}^*
=\proxcc{L}{g^*}
\in\Gamma_0(\XS).
\end{equation}

\ref{t:25iii}:
We derive from \eqref{e:nn5j} and
\cite[Corollary~15.28(i) and Proposition~13.24(i)]{Livre1} that
\begin{align}
(\forall\mathsf{x}\in\XS)\quad
\mathsf{g}(\mathsf{x})
&=\min\menge{(g\infconv\qq_{\HH})^*(x)-\qq_{\XS}(\mathsf{x})}{
x\in\HH\,\,\text{and}\,\,L^*x=\mathsf{x}}
\nonumber\\
&=\min\menge{g^*(x)+\qq_{\HH}(x)-\qq_{\XS}(\mathsf{x})}{
x\in\HH\,\,\text{and}\,\,L^*x=\mathsf{x}}.
\end{align}
Thus, \eqref{e:fdvn} and \eqref{e:dvn2} yield the announced
identity.

\ref{t:25iv}:
Set $U=\menge{x\in\HH}{(\forallmu\omega\in\Omega)\,\,
x(\omega)\in\dom\fw^{**}}$. Then \cite[Theorem~4.7(v)]{Cana24}
states that $\cdom g^*=\overline{U}$. Thus, it results from
\eqref{e:nn5j} and \cite[Theorem~5.5(ii)]{Svva23} that
\begin{align}
\cdom\mathsf{g}
=\cdom(\proxc{L}{g^*})
=\overline{L^*(\dom g^*)}
=\overline{L^*\brk!{\cdom g^*}}
=\overline{L^*(\overline{U})}
=\overline{L^*(U)},
\end{align}
and the assertion follows from \eqref{e:dvn2}.

\ref{t:25v}:
It follows from \eqref{e:nn5j},
\cite[Proposition~5.3(iv)]{Svva23}, and \eqref{e:121} that
\begin{equation}
\mathsf{g}^*
=\brk!{\proxc{L}{g^*}}^*
=\proxcc{L}{g^{**}}
=\rcm(\LW,\fw^*)_{\omega\in\Omega}.
\end{equation}
At the same time, we derive from \eqref{e:nn5j},
\cite[Proposition~13.29]{Livre1}, and \eqref{e:u1ng} that
\begin{equation}
\mathsf{g}^*
=\brk!{\qq_{\XS}-\brk!{\brk{g\infconv\qq_{\HH}}\circ L}^{**}}^*
-\qq_{\XS}
=\brk3{\qq_{\XS}-\int_{\Omega}(\fw^*\infconv\qq_{\HW})
\circ\LW\,\mu(d\omega)}^*-\qq_{\XS}.
\end{equation}

\ref{t:25vi}:
Since $g\in\Gamma_0(\HH)$, we deduce from
\eqref{e:4cpy}, \cite[Proposition~5.3(v)]{Svva23}, Moreau's
biconjugation theorem, and \ref{t:25i} that
\begin{equation}
\mathsf{h}^*
=\brk!{\proxcc{L}{g^*}}^*
=\proxc{L}{g^{**}}
=\proxc{L}{g}
=\rmi(\LW,\fw^*)_{\omega\in\Omega}.
\end{equation}

\ref{t:25vii}:
Use \ref{t:25i}, \ref{t:25v}, and \cite[Theorem~14.3(i)]{Livre1}.

\ref{t:25viii}:
In view of \eqref{e:nn5j}, we derive from
\cite[Theorem~18.15]{Livre1},
\cite[Theorem~4.7(iv)]{Cana24}, and \eqref{e:dvn2} that
\begin{align}
\partial\rmi(\LW,\fw)_{\omega\in\Omega}
&=\brk2{\nabla\brk!{\brk{g\infconv\qq_{\HH}}\circ L}}^{-1}
-\Id_{\XS}
\nonumber\\
&=\brk2{\mathop{L^*}\circ
\brk!{\nabla\brk{g\infconv\qq_{\HH}}}\circ
\mathop{L}}^{-1}-\Id_{\XS}
\nonumber\\
&=\brk3{\int_{\Omega}\brk!{\LW^*\circ\prox_{\fw^{**}}\circ\LW}
\mu(d\omega)}^{-1}-\Id_{\XS}
\label{e:ebat}
\\
&=\rmi(\LW,\partial\fw^{**})_{\omega\in\Omega}.
\end{align}

\ref{t:25ix}:
Use \eqref{e:ebat} and \cite[Example~23.3]{Livre1}.

\ref{t:25x}:
By \cite[Proposition~13.16(iii)]{Livre1},
$(\forall\omega\in\Omega)$ $\fw^{***}=\fw^*$.
Hence, it results from \ref{t:25ii}, Moreau's decomposition
\cite[Theorem~14.3(ii)]{Livre1}, \ref{t:25vi}, and \ref{t:25ix}
that
\begin{align}
\prox_{\mathsf{h}}
&=\Id_{\XS}-\prox_{\mathsf{h}^*}
\nonumber\\
&=\Id_{\XS}-\prox_{\rmi(\LW,\fw^*)_{\omega\in\Omega}}
\nonumber\\
&=\Id_{\XS}-\int_{\Omega}\brk!{\LW^*\circ\prox_{\fw^{***}}\circ\LW}
\mu(d\omega)
\nonumber\\
&=\Id_{\XS}-\int_{\Omega}\brk!{\LW^*\circ\prox_{\fw^*}\circ\LW}
\mu(d\omega).
\end{align}

\ref{t:25xi}:
Because $g^*\in\Gamma_0(\HH)$, it results from
\eqref{e:4cpy}, \cite[Theorem~5.5(v)]{Svva23},
\cite[Theorem~4.7(viii)]{Cana24}, and \eqref{e:uvhz} that
\begin{equation}
\mathsf{h}\infconv\qq_{\XS}
=\brk!{\proxcc{L}{g^*}}\infconv\qq_{\XS}
=(g^*\infconv\qq_{\HH})\circ L
=\int_{\Omega}\brk2{\brk!{\fw^{**}\infconv\qq_{\HW}}\circ\LW}
\mu(d\omega).
\end{equation}

\ref{t:25xii}:
Combine \ref{t:25xi} and \cite[Proposition~17.5]{Livre1}.

\ref{t:25xiii}:
In this case, $L$ is an isometry and the conclusion follows from
\cite[Proposition~5.3(vii)]{Svva23}, \eqref{e:nn5j}, and
\eqref{e:4cpy}.
\end{proof}

\begin{remark}
The motivation for calling $\rmi(\LW,\fw)_{\omega\in\Omega}$ an
integral proximal mixture comes from
Theorem~\ref{t:25}\ref{t:25ix}.
\end{remark}

\begin{example}
\label{ex:30}
Consider the setting of Example~\ref{ex:1i+}. Then \eqref{e:120}
becomes
\begin{equation}
\label{e:64}
\rmi(\LS_k,\mathsf{f}_k)_{1\leq k\leq p}
=\brk3{\sum_{k=1}^p\alpha_k
(\mathsf{f}_k^*\infconv\qq_{\HS_k})
\circ\mathsf{L}_k}^*-\qq_{\XS},
\end{equation}
which is the \emph{proximal mixture} introduced in
\cite[Example~5.9]{Svva23}.
\end{example}

Our next illustration concerns a new object: the proximal 
expectation of a family of functions.

\begin{definition}
\label{d:201}
Let $(\Omega,\FF,\PP)$ be a complete probability space,
let $\HS$ be a separable real Hilbert space,
and let $(\fw)_{\omega\in\Omega}$ be a family of functions
in $\Gamma_0(\HS)$ such that the function
\begin{equation}
\Omega\times\HS\to\RX\colon(\omega,\mathsf{x})\mapsto
\fw(\mathsf{x})
\end{equation}
is $\FF\otimes\BE_{\HS}$-measurable.
Suppose that there exist
$r\in\mathscr{L}^2\brk{\Omega,\FF,\PP;\HS}$ and
$r^*\in\mathscr{L}^2\brk{\Omega,\FF,\PP;\HS}$ such that the
functions
$\omega\mapsto\fw(r(\omega))$ and
$\omega\mapsto\fw^*(r^*(\omega))$ lie in
$\mathscr{L}^1(\Omega,\FF,\PP;\RR)$.
Using the notation \eqref{e:vk}, the \emph{proximal
expectation} of the family $(\fw)_{\omega\in\Omega}$ is 
\begin{equation}
\label{e:fr2}
\PE(\fw)_{\omega\in\Omega}
=\brk2{\EE\brk!{\fw^*\infconv\qq_{\HS}}_{\omega\in\Omega}}^*
-\qq_{\HS}.
\end{equation}
\end{definition}

\begin{proposition}
\label{p:18}
Consider the setting of Example~\ref{ex:1iii} with the additional
assumption that $\mu$ is a probability measure.
Let $(\fw)_{\omega\in\Omega}$ be a family of functions
in $\Gamma_0(\HS)$ such that the function
\begin{equation}
\Omega\times\HS\to\RX\colon(\omega,\mathsf{x})\mapsto
\fw(\mathsf{x})
\end{equation}
is $\FF\otimes\BE_{\HS}$-measurable. Suppose that there exist
$r\in\mathscr{L}^2\brk{\Omega,\FF,\mu;\HS}$ and
$r^*\in\mathscr{L}^2\brk{\Omega,\FF,\mu;\HS}$ such that the
functions
$\omega\mapsto\fw(r(\omega))$ and
$\omega\mapsto\fw^*(r^*(\omega))$ lie in
$\mathscr{L}^1(\Omega,\FF,\mu;\RR)$.
Then
\begin{equation}
\label{e:205}
\PE(\fw)_{\omega\in\Omega}
=\rmi(\Id_{\HS},\fw)_{\omega\in\Omega}
=\rcm(\Id_{\HS},\fw)_{\omega\in\Omega}.
\end{equation}
\end{proposition}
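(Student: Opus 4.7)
The strategy is to reduce both asserted identities to results already established in the paper. Both equalities should follow once we verify that the hypotheses of Definition~\ref{d:7} are satisfied in the setting of Example~\ref{ex:1iii} with $\XS=\HS$ and $\LW=\Id_{\HS}$; the first equality will then be a direct unfolding of the definitions, while the second will be an instance of Theorem~\ref{t:25}\ref{t:25xiii}.

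First, I would check Assumption~\ref{a:4} with $\LW=\Id_{\HS}$. Items~\ref{a:4a} and \ref{a:4b} are immediate, \ref{a:4c} asserts that the constant mapping $\omega\mapsto\mathsf{x}$ lies in $\mathfrak{G}$, which holds since constants are $(\FF,\BE_{\HS})$-measurable, and \ref{a:4d} reduces to $\int_{\Omega}\norm{\Id_{\HS}}^2\mu(d\omega)=\mu(\Omega)=1$. For Assumption~\ref{a:3}: \ref{a:3a} is automatic since each $\fw\in\Gamma_0(\HS)$ admits a continuous affine minorant; items \ref{a:3b} and \ref{a:3c} are given by the hypotheses on $r$ and $r^*$, noting that $\mathscr{L}^2\brk{\Omega,\FF,\mu;\HS}\subset\GG$ because $\mu$ is a probability measure. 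The only nontrivial verification is \ref{a:3d}, which requires that $\omega\mapsto\prox_{\fw^*}x^*(\omega)$ be $(\FF,\BE_{\HS})$-measurable for every $x^*\in\GG$. I would argue that the $\FF\otimes\BE_{\HS}$-measurability of $(\omega,\mathsf{x})\mapsto\fw(\mathsf{x})$ propagates to $(\omega,\mathsf{x})\mapsto\fw^*(\mathsf{x})$ via a supremum over a countable dense subset of $\HS$, then to $(\omega,\mathsf{x})\mapsto\prox_{\fw^*}\mathsf{x}$ through a normal-integrand / measurable-selection argument (parallel to the invocation of \cite[Corollary~14.34]{Rock09} carried out in the proof of Example~\ref{ex:2005}), and finally compose with the measurable section $x^*$.

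Once Assumptions~\ref{a:4} and \ref{a:3} are in place, \eqref{e:120} specializes to
\begin{equation*}
\rmi(\Id_{\HS},\fw)_{\omega\in\Omega}
=\brk3{\int_{\Omega}(\fw^*\infconv\qq_{\HS})\mu(d\omega)}^*-\qq_{\HS},
\end{equation*}
and Notation~\ref{n:62} identifies the inner integral with $\EE(\fw^*\infconv\qq_{\HS})_{\omega\in\Omega}$, which matches \eqref{e:fr2} and establishes $\PE(\fw)_{\omega\in\Omega}=\rmi(\Id_{\HS},\fw)_{\omega\in\Omega}$. For the remaining identity, since $\mu$ is a probability measure and each $\LW=\Id_{\HS}$ is trivially an isometry, Theorem~\ref{t:25}\ref{t:25xiii} yields $\rmi(\Id_{\HS},\fw)_{\omega\in\Omega}=\rcm(\Id_{\HS},\fw)_{\omega\in\Omega}$, completing the chain.

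The principal obstacle is the justification of Assumption~\ref{a:3}\ref{a:3d} in its Hilbert-valued form; the joint measurability of the proximal operator is standard but has to be invoked carefully, whereas the rest of the argument is essentially bookkeeping that matches \eqref{e:fr2} with \eqref{e:120} and cites Theorem~\ref{t:25}\ref{t:25xiii}.
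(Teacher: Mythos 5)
Your overall route is the same as the paper's: verify Assumptions~\ref{a:4} and \ref{a:3} in the setting of Example~\ref{ex:1iii} with $\LW=\Id_{\HS}$, unfold \eqref{e:120} against \eqref{e:fr2} via Notation~\ref{n:62}, and invoke Theorem~\ref{t:25}\ref{t:25xiii} for the comixture identity; all of that matches. The one point of divergence is the verification of Assumption~\ref{a:3}\ref{a:3d}, and there your specific mechanism is shaky: you propose to get joint measurability of $(\omega,\mathsf{x})\mapsto\fw^*(\mathsf{x})$ by taking the supremum defining $\fw^*$ over a countable dense subset of $\HS$. For a merely lower semicontinuous $\fw$ the supremand $\mathsf{x}\mapsto\scal{\mathsf{x}}{\mathsf{x}^*}_{\HS}-\fw(\mathsf{x})$ is only upper semicontinuous, so its supremum over a fixed countable dense set can be strictly smaller than $\fw^*(\mathsf{x}^*)$ (take $\fw$ the indicator of a single point off the dense set); one must instead run the supremum over a countable dense family of measurable selections of the epigraphs, i.e., the full normal-integrand conjugacy theorem. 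Moreover, the Rockafellar--Wets results you point to (as used in Example~\ref{ex:2005}) are finite-dimensional, so in the Hilbert-valued setting the correct tools are Attouch's measurability theorem and Castaing--Valadier. The paper avoids the conjugate altogether: it first shows that $\omega\mapsto\prox_{\fw}x(\omega)$ lies in $\mathfrak{G}$ using \cite[Th\'eor\`eme~2.3]{Atto79}, \cite[Lemma~III.14]{Cast77}, and \cite[Proposition~12.28]{Livre1}, and then obtains \ref{a:3d} from Moreau's decomposition $\prox_{\fw^*}=\Id_{\HS}-\prox_{\fw}$. You flagged this step as the principal obstacle, rightly so; replacing your dense-subset supremum by the Attouch--Moreau route (or by the genuine measurable-selection version of the conjugacy argument) closes the gap, and the rest of your argument stands.
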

\begin{proof}
Note that
\begin{equation}
\GG=\mathscr{L}^2\brk!{\Omega,\FF,\mu;\HS}.
\end{equation}
Using the completeness of $(\Omega,\FF,\mu)$, we derive from
\cite[Th\'eor\`eme~2.3]{Atto79}, \cite[Lemma~III.14]{Cast77}, and 
\cite[Proposition~12.28]{Livre1} that, for every $x\in\GG$, the
mapping $\Omega\to\HS\colon\omega\mapsto\prox_{\fw}x(\omega)$ lies
in $\mathfrak{G}$. Thus, for every $x^*\in\GG$, using
\cite[Theorem~14.3(ii)]{Livre1}, we deduce that the mapping
$\Omega\to\HS\colon\omega\mapsto\prox_{\fw^*}x^*(\omega)
=x^*(\omega)-\prox_{\fw}x^*(\omega)$ also lies in $\mathfrak{G}$.
Hence, the family $(\fw)_{\omega\in\Omega}$ satisfies
Assumption~\ref{a:3}. Thus, invoking Notation~\ref{n:62}, we
deduce from Theorem~\ref{t:25}\ref{t:25xiii},
\eqref{e:120}, and \eqref{e:fr2} that
\begin{align}
\rcm(\Id_{\HS},\fw)_{\omega\in\Omega}
&=\rmi(\Id_{\HS},\fw)_{\omega\in\Omega}
\nonumber\\
&=\brk3{\int_{\Omega}\brk!{(\fw^*\infconv\qq_{\HS})\circ\Id_{\HS}}
\,\mu(d\omega)}^*-\qq_{\HS}
\nonumber\\
&=\brk2{\EE\brk!{\fw^*\infconv\qq_{\HS}}_{\omega\in\Omega}}^*
-\qq_{\HS}
\nonumber\\
&=\PE(\fw)_{\omega\in\Omega},
\end{align}
as announced.
\end{proof}

Combining Theorem~\ref{t:25}, Proposition~\ref{p:101}, and
Proposition~\ref{p:18} yields at once the following properties of
the proximal expectation.

\begin{proposition}
\label{p:19}
Consider the setting of Definition~\ref{d:201}.
Then the following hold:
\begin{enumerate}
\item
$\PE(\fw)_{\omega\in\Omega}\in\Gamma_0(\HS)$.
\item
Let $\mathsf{x}\in\HS$. Then
\begin{multline}
\hspace{-10mm}\bigl(\PE(\fw)_{\omega\in\Omega}\bigr)(\mathsf{x})\\
\hspace{-12mm}=\min\Menge3{\int_{\Omega}\brk2{\fw\brk!{x(\omega)}+
\qq_{\HS}\brk!{x(\omega)}}\PP(d\omega)-\qq_{\HS}(\mathsf{x})}
{x\in L^2\brk!{\Omega,\FF,\PP;\HS}\,\,\text{and}\,\,
\EE x=\mathsf{x}}.
\end{multline}
\item
$\cdom\PE(\fw)_{\omega\in\Omega}=\overline{\menge{\EE x}{
x\in L^2\brk{\Omega,\FF,\PP;\HS}\,\,
\text{and}\,\,(\forallmu\omega\in\Omega)\,\,
x(\omega)\in\dom\fw}}$.
\item
$(\PE(\fw)_{\omega\in\Omega})^*
=\PE(\fw^*)_{\omega\in\Omega}
=\brk{\EE\brk{\fw\infconv\qq_{\HS}}_{\omega\in\Omega}}^*
-\qq_{\HS}$.
\item
$\partial\PE(\fw)_{\omega\in\Omega}=
\PE(\partial\fw)_{\omega\in\Omega}$.
\item
$\PE(\fw)_{\omega\in\Omega}\infconv\qq_{\HS}=
\EE(\fw\infconv\qq_{\HS})_{\omega\in\Omega}$.
\item
$\prox_{\PE(\fw)_{\omega\in\Omega}}=
\EE(\prox_{\fw})_{\omega\in\Omega}$.
\item
$\Argmin\PE(\fw)_{\omega\in\Omega}=
\Argmin\EE(\fw\infconv\qq_{\HS})_{\omega\in\Omega}$.
\end{enumerate}
\end{proposition}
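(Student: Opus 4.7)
The plan is to specialize Theorem~\ref{t:25} to the setting of Proposition~\ref{p:18}, i.e., to the case where $\XS = \HS$, $(\forall\omega\in\Omega)$ $\HW=\HS$ and $\LW=\Id_{\HS}$, the measure $\mu$ is a probability measure, and the representation $\GG=\mathscr{L}^2(\Omega,\FF,\mu;\HS)$ from Example~\ref{ex:1iii} is in force. In this setting, $L\colon\HS\to\HH$ of \eqref{e:rsmp} is the constant-map embedding and, by the computation in the proof of Theorem~\ref{t:23}\ref{t:23iv}, it is actually an isometry; moreover, its adjoint given by \eqref{e:dvn2} reduces to $L^*x=\int_{\Omega}x(\omega)\,\mu(d\omega)=\EE x$. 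Throughout, I use the fact that $(\forall\omega\in\Omega)$ $\fw\in\Gamma_0(\HS)$ entails $\fw^{**}=\fw$ (Moreau's biconjugation theorem). Proposition~\ref{p:18} asserts that $\PE(\fw)_{\omega\in\Omega}=\rmi(\Id_{\HS},\fw)_{\omega\in\Omega}=\rcm(\Id_{\HS},\fw)_{\omega\in\Omega}$, which is what allows Theorem~\ref{t:25} to be brought to bear.

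I would then proceed item by item. Item~(i) is Theorem~\ref{t:25}\ref{t:25i}. For item~(ii), I apply Theorem~\ref{t:25}\ref{t:25iii}, in which the constraint $\int_{\Omega}\LW^*x(\omega)\mu(d\omega)=\mathsf{x}$ becomes $\EE x=\mathsf{x}$ and $\fw^{**}=\fw$ collapses the integrand; the $\qq_{\HH}$ term rewrites as $\int_{\Omega}\qq_{\HS}(x(\omega))\,\PP(d\omega)$ by \eqref{e:y9d2}. Item~(iii) follows from Theorem~\ref{t:25}\ref{t:25iv}, using $\dom\fw^{**}=\dom\fw$ and $L^*=\EE$. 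For item~(iv), Theorem~\ref{t:25}\ref{t:25v} combined with Proposition~\ref{p:18} (applied to $\fw^*$, whose assumptions are symmetric to those on $\fw$) gives $(\PE(\fw)_{\omega\in\Omega})^*=\PE(\fw^*)_{\omega\in\Omega}$, and the alternative expression is obtained by specializing the second identity in Theorem~\ref{t:25}\ref{t:25v}, using $\qq_{\XS}-\int_{\Omega}(\fw\infconv\qq_{\HS})\,d\mu=\EE(\qq_{\HS}-(\fw\infconv\qq_{\HS}))_{\omega\in\Omega}$ and Moreau's decomposition $\qq_{\HS}-(\fw\infconv\qq_{\HS})=\fw^*\infconv\qq_{\HS}$ applied pointwise.

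Item~(v) comes from Theorem~\ref{t:25}\ref{t:25viii} with $\partial\fw^{**}=\partial\fw$, together with \eqref{e:fr} and the identity $\prox_{\fw}=J_{\partial\fw}$: the integral resolvent mixture of $\Id_{\HS}$ and $\partial\fw$ coincides with $\PE(\partial\fw)_{\omega\in\Omega}$ by Proposition~\ref{p:101}. Items~(vi), (vii), and (viii) are direct translations of Theorem~\ref{t:25}\ref{t:25xi}, Theorem~\ref{t:25}\ref{t:25ix}, and Theorem~\ref{t:25}\ref{t:25xii}, respectively, using once more $\fw^{**}=\fw$ and $\LW=\Id_{\HS}$, together with Notation~\ref{n:62} which turns $\int_{\Omega}\cdot\,\mu(d\omega)$ into $\EE(\cdot)_{\omega\in\Omega}$.

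There is no real obstacle: the verification of Assumption~\ref{a:3} was already handled inside the proof of Proposition~\ref{p:18}, so Theorem~\ref{t:25} applies without further measurability work. The only mild care point is item~(v), where one must remember that Proposition~\ref{p:18} has to be invoked a second time (on $(\fw^*)_{\omega\in\Omega}$) to identify $\rcm(\Id_{\HS},\fw^*)_{\omega\in\Omega}$ with $\PE(\fw^*)_{\omega\in\Omega}$; the integrability hypothesis on $\omega\mapsto\fw^*(r^*(\omega))$ in Definition~\ref{d:201} is precisely what makes this symmetric application legitimate.
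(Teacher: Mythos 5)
Your proposal is correct and follows essentially the same route as the paper, which proves Proposition~\ref{p:19} precisely by combining Theorem~\ref{t:25} with Propositions~\ref{p:101} and \ref{p:18} (the paper states this in one sentence and gives no further detail). Your item-by-item specialization, including the second invocation of Proposition~\ref{p:18} on $(\fw^*)_{\omega\in\Omega}$ for item~(iv) and the use of Proposition~\ref{p:101} for item~(v), simply makes explicit what the paper leaves to the reader.
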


\begin{remark}
\label{r:12}
In Definition~\ref{d:201}, consider the measure space 
$(\Omega,\FF,\mu)$ of Example~\ref{ex:1i+} with the additional 
assumption that $\sum_{k=1}^p\alpha_k=1$. Then the proximal 
expectation becomes
\begin{equation}
\label{e:fr3}
\PE(\mathsf{f}_k)_{1\leq k\leq p}
=\brk3{\sum_{k=1}^p\alpha_k\brk!{\mathsf{f}_k^*\infconv
\qq_{\HS}}}^*-\qq_{\HS}.
\end{equation}
\begin{enumerate}
\item
\label{r:12i}
The function of \eqref{e:fr3} is the \emph{proximal average} of 
the family $(\mathsf{f}_k)_{1\leq k\leq p}$. By specializing
Proposition~\ref{p:19} to this setting, we recover 
properties of the proximal average found in \cite{Baus08}.
\item
\label{r:12ii}
Let $(\mathsf{f}_k)_{1\leq k\leq p}$ be functions in
$\Gamma_0(\HS)$. 
In some data analysis applications (see, e.g., 
\cite{Cheu17,Huzz22,Yuyl13}),
\eqref{e:fr3} has been used instead of the standard average
$\sum_{k=1}^p\alpha_k\mathsf{f}_k$. The latter can be regarded as
the empirical $p$-sample approximation to the true expectation
$\EE(\fw)_{\omega\in\Omega}$ arising from a family 
$(\fw)_{\omega\in\Omega}$ in $\Gamma_0(\HS)$. Likewise, we can
regard the proximal average \eqref{e:fr3} as the empirical
approximation to the proximal expectation
$\PE(\fw)_{\omega\in\Omega}$.
\end{enumerate}
\end{remark}

\begin{remark}
The strategy described in Remark~\ref{r:12}\ref{r:12ii} can be
generalized as follows. Let $\mu$ be a probability measure. Then it
may be appropriate in certain variational problems to replace the
standard composite average $\int_{\Omega}(\fw\circ\LW)\mu(d\omega)$
by the integral proximal comixture
$\rcm(\LW,\fw)_{\omega\in\Omega}$ of
Definition~\ref{d:7}. The latter is easier to handle numerically as
its proximity operator is explicitly given by
Theorem~\ref{t:25}\ref{t:25x} and it follows from
Theorem~\ref{t:25}\ref{t:25xii} that its set of minimizers
coincides with that of the function
$\int_{\Omega}((\fw\infconv\qq_{\HW})\circ\LW)\mu(d\omega)$. 
\end{remark}

\section{Relaxation of systems of monotone inclusions}
\label{sec:5}

We place our focus on the following general system of composite
monotone inclusions. 

\begin{problem}
\label{prob:10}
Suppose that Assumptions~\ref{a:2} and \ref{a:4} are in force and
that $\mathsf{V}\neq\{\mathsf{0}\}$ is a closed vector subspace of
$\XS$. The task is to 
\begin{equation}
\label{e:p1}
\text{find}\,\,\mathsf{x}\in\mathsf{V}\,\,\text{such that}\,\,
(\forallmu\omega\in\Omega)\,\,\mathsf{0}\in\AW(\LW\mathsf{x}).
\end{equation}
\end{problem}

The instantiations of Problem~\ref{prob:10} found in 
\cite{Svva23,Ibap21,Siim22} (see also \cite{Byrn12,Cens05} for
further special cases) correspond to the setting of
Example~\ref{ex:1i+} with finitely many inclusions, that is,
\begin{equation}
\label{e:p19}
\text{find}\,\,\mathsf{x}\in\mathsf{V}\,\,\text{such that}\,\,
\brk!{\forall k\in\{1,\ldots,p\}}\;\;
\mathsf{0}\in\mathsf{A}_k(\mathsf{L}_k\mathsf{x}).
\end{equation}
On the other hand, the instantiation of 
\cite{Butn95} corresponds to the setting of Example~\ref{ex:1iii}
where $\mu$ is a probability measure, $\mathsf{V}=\HS$ and, for
every $\omega\in\Omega$, $\AW$ is the normal cone operator of a
nonempty closed convex subset $\mathsf{C}_\omega$ of $\HS$ and
$\LW=\Id_{\HS}$, that is,
\begin{equation}
\label{e:p29}
\text{find}\,\,\mathsf{x}\in\HS\,\,\text{such that}\,\,
(\forallmu\omega\in\Omega)\,\,\mathsf{x}\in\mathsf{C}_{\omega}.
\end{equation}
The last problem is known as the stochastic convex feasibility
problem. Our formulation targets a much broader inclusion model
than those.

Of interest to us are the scenarios in which Problem~\ref{prob:10}
has no solution and must be replaced by a relaxed one which
furnishes meaningful solutions. We consider the following
relaxation which corresponds, in the special case of
Example~\ref{ex:1i+}, to that proposed in \cite{Siim22}.

\begin{problem}
\label{prob:11}
Suppose that Assumptions~\ref{a:2} and \ref{a:4} are in force and
that $\mathsf{V}\neq\{\mathsf{0}\}$ is a closed vector subspace of
$\XS$, and let $\gamma\in\RPP$. The task is to 
\begin{equation}
\label{e:p2}
\text{find}\;\:\mathsf{x}\in\XS\;\:\text{such that}\;\:
\mathsf{0}\in\brk2{\proxc{\proj_{\mathsf{V}}}
{\rcm(\LW,\gamma\AW)_{\omega\in\Omega}}}\mathsf{x}.
\end{equation}
\end{problem}

Let us examine the interplay between Problems~\ref{prob:10} and
\ref{prob:11}.

\begin{proposition}
\label{p:24}
Consider the settings of Problems~\ref{prob:10} and \ref{prob:11},
let $\mathsf{S}_1$ and $\mathsf{S}_2$ be their respective sets
of solutions, and set $\mathsf{W}=\proxc{\proj_{\mathsf{V}}}
{\rcm(\LW,\gamma\AW)_{\omega\in\Omega}}$. Then
the following hold:
\begin{enumerate}
\item
\label{p:24i}
$\mathsf{W}$ is maximally monotone.
\item
\label{p:24ii}
$J_{\mathsf{W}}=\proj_{\mathsf{V}}\circ
(\Id_{\mathsf{X}}+\int_{\Omega}(\LW^*\circ(J_{\gamma\AW}-\Id_{\HW})
\circ\LW)\mu(d\omega))\circ\proj_{\mathsf{V}}$.
\item
\label{p:24iii}
$\mathsf{S}_1$ and $\mathsf{S}_2$ are closed convex sets.
\item
\label{p:24iv}
Problem~\ref{prob:11} is an exact relaxation of
Problem~\ref{prob:10} in the sense that $\mathsf{S}_1\neq\emp$ 
$\Rightarrow$ $\mathsf{S}_2=\mathsf{S}_1$.
\item
\label{p:24v}
$\mathsf{S}_2=\zer(N_{\mathsf{V}}+
\int_{\Omega}(\LW^*\circ(\moyo{\AW}{\gamma})\circ\LW)
\mu(d\omega))$.
\end{enumerate}
\end{proposition}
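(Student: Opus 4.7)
The plan is to reduce every claim to Theorem~\ref{t:23} applied to the family $(\gamma\AW)_{\omega\in\Omega}$ (which inherits Assumptions~\ref{a:2}--\ref{a:4} from $(\AW)_{\omega\in\Omega}$), combined with the definition of the resolvent composition. Write $\mathsf{C}=\rcm(\LW,\gamma\AW)_{\omega\in\Omega}$ and observe that $\proj_{\mathsf{V}}$ is self-adjoint, so Definition~\ref{d:s1} gives $\mathsf{W}=(\proj_{\mathsf{V}}\circ J_{\mathsf{C}}\circ\proj_{\mathsf{V}})^{-1}-\Id_{\XS}$ and hence $J_{\mathsf{W}}=\proj_{\mathsf{V}}\circ J_{\mathsf{C}}\circ\proj_{\mathsf{V}}$. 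Item~\ref{p:24i} then follows from Theorem~\ref{t:23}\ref{t:23ii} (which gives $\mathsf{C}$ maximally monotone) together with the preservation of maximal monotonicity under resolvent composition from \cite[Theorem~4.5(i)]{Svva23}. Item~\ref{p:24ii} is a direct substitution using Theorem~\ref{t:23}\ref{t:23vi}. For \ref{p:24iii}, $\mathsf{S}_2=\zer\mathsf{W}$ is closed convex by \ref{p:24i}, while $\mathsf{S}_1=\mathsf{V}\cap L^{-1}(\zer\mathsf{A})$, where $L\colon\mathsf{x}\mapsto\mathfrak{e}_{\LS}\mathsf{x}$ and $\mathsf{A}=\leftindex^{\mathfrak{G}}{\int}_{\Omega}^{\oplus}\AW\mu(d\omega)$, which is maximally monotone by \cite[Theorem~3.8]{Cana24}.

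For \ref{p:24v}, I would characterize $\mathsf{x}\in\zer\mathsf{W}=\Fix J_{\mathsf{W}}$ by the fixed-point equation $\mathsf{x}=\proj_{\mathsf{V}}(J_{\mathsf{C}}(\proj_{\mathsf{V}}\mathsf{x}))$. Since $\mathsf{V}$ is a subspace, this splits as $\mathsf{x}\in\mathsf{V}$ together with $J_{\mathsf{C}}(\mathsf{x})-\mathsf{x}\in\mathsf{V}^{\perp}=N_{\mathsf{V}}\mathsf{x}$. Using the identity $J_{\gamma\AW}-\Id_{\HW}=-\gamma\moyo{\AW}{\gamma}$ in the formula from \ref{p:24ii} gives
\[
J_{\mathsf{C}}(\mathsf{x})-\mathsf{x}=-\gamma\int_{\Omega}\LW^*\brk!{\moyo{\AW}{\gamma}(\LW\mathsf{x})}\mu(d\omega),
\]
and since $N_{\mathsf{V}}\mathsf{x}=\mathsf{V}^{\perp}$ is a subspace the factor $-\gamma$ is immaterial, yielding \ref{p:24v}. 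The inclusion $\mathsf{S}_1\subseteq\mathsf{S}_2$ in \ref{p:24iv} is then immediate: if $\mathsf{x}\in\mathsf{S}_1$, then $\mathsf{x}\in\mathsf{V}$ and $\mathsf{0}\in\AW(\LW\mathsf{x})$ $\mu$-a.e., so $\moyo{\AW}{\gamma}(\LW\mathsf{x})=\mathsf{0}$ $\mu$-a.e., the integral in \ref{p:24v} vanishes, and $\mathsf{0}\in N_{\mathsf{V}}\mathsf{x}$.

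The main obstacle is the reverse inclusion in \ref{p:24iv}. I would fix $\mathsf{y}\in\mathsf{S}_1$ and $\mathsf{x}\in\mathsf{S}_2$; by \ref{p:24v}, $\mathsf{x}\in\mathsf{V}$ and there exists $\mathsf{u}\in\mathsf{V}^{\perp}$ with $\mathsf{u}=-\int_{\Omega}\LW^*(\moyo{\AW}{\gamma}(\LW\mathsf{x}))\mu(d\omega)$. Pairing with $\mathsf{x}-\mathsf{y}\in\mathsf{V}$ annihilates $\mathsf{u}$ and, by Lemma~\ref{l:5}\ref{l:5ii}, yields
\[
\int_{\Omega}\scal{\LW(\mathsf{x}-\mathsf{y})}{\moyo{\AW}{\gamma}(\LW\mathsf{x})}_{\HW}\mu(d\omega)=0.
\]
Because each $\moyo{\AW}{\gamma}$ is $\gamma$-cocoercive and $\moyo{\AW}{\gamma}(\LW\mathsf{y})=\mathsf{0}$ $\mu$-a.e., the integrand is $\mu$-a.e.\ bounded below by $\gamma\norm{\moyo{\AW}{\gamma}(\LW\mathsf{x})}_{\HW}^2\geq 0$, so the vanishing of the integral forces $\moyo{\AW}{\gamma}(\LW\mathsf{x})=\mathsf{0}$ $\mu$-a.e., i.e., $\mathsf{0}\in\AW(\LW\mathsf{x})$ $\mu$-a.e., giving $\mathsf{x}\in\mathsf{S}_1$. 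The subtle points here---measurability, integrability, and interchange of scalar product with the Lebesgue integral---are controlled by applying Proposition~\ref{p:4} to the $\gamma^{-1}$-Lipschitzian family $(\moyo{\AW}{\gamma})_{\omega\in\Omega}$.
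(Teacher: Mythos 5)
Your proof is correct, and for items \ref{p:24i}--\ref{p:24iii} it follows essentially the same route as the paper: pass to $\mathsf{C}=\rcm(\LW,\gamma\AW)_{\omega\in\Omega}=\proxcc{L}{(\gamma A)}$ with $A=\leftindex^{\mathfrak{G}}{\int}_{\Omega}^{\oplus}\AW\mu(d\omega)$, and invoke \cite[Theorem~4.5]{Svva23} together with Theorem~\ref{t:23}. Where you genuinely diverge is in items \ref{p:24iv}--\ref{p:24v}. The paper first rewrites Problem~\ref{prob:10} as ``find $\mathsf{x}\in\mathsf{V}$ such that $0\in A(L\mathsf{x})$'' in the direct integral $\HH$ --- using \cite[Proposition~3.5]{Cana24} for $J_{\gamma A}$ and $\moyo{A}{\gamma}$ and \cite[Proposition~3.12]{Cana24} for $L^*$ --- and then cites \cite[Theorem~6.3(ii),(iii),(v),(vi)]{Svva23} wholesale; the entire exactness argument is thus outsourced. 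You instead re-derive that content from first principles: the fixed-point characterization $\zer\mathsf{W}=\Fix(\proj_{\mathsf{V}}\circ J_{\mathsf{C}}\circ\proj_{\mathsf{V}})$, the splitting into $\mathsf{x}\in\mathsf{V}$ and $J_{\mathsf{C}}\mathsf{x}-\mathsf{x}\in\mathsf{V}^{\perp}$, and the cocoercivity of $\moyo{\AW}{\gamma}$ forcing the nonnegative integrand to vanish $\mae$ This makes the mechanism of exact relaxation transparent and keeps the argument self-contained, at the price of having to justify the measurability and integrability of $\omega\mapsto\LW^*(\moyo{\AW}{\gamma}(\LW\mathsf{x}))$ and the interchange in Lemma~\ref{l:5}\ref{l:5ii} yourself; your appeal to Proposition~\ref{p:4} for the $\gamma^{-1}$-Lipschitzian family $(\moyo{\AW}{\gamma})_{\omega\in\Omega}$ is the right tool for that (its hypotheses \ref{p:4a}--\ref{p:4b} are supplied by \cite[Proposition~3.5]{Cana24}, which is also what the paper uses). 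Both proofs are valid; the paper's is shorter because the heavy lifting was already done in \cite{Svva23}.
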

\begin{proof}
Set $A=\leftindex^{\mathfrak{G}}{\int}_{\Omega}^{\oplus}
\AW\mu(d\omega)$ and
\begin{equation}
\label{e:rsmp2}
L\colon\XS\to\HH\colon\mathsf{x}\mapsto
\mathfrak{e}_{\LS}\mathsf{x}.
\end{equation}
Then \cite[Proposition~3.5]{Cana24} asserts that 
\begin{equation}
\label{e:e11}
J_{\gamma A}=\leftindex^{\mathfrak{G}}{\Int}_{\Omega}^{\oplus}
J_{\gamma\AW}\mu(d\omega)
\quad\text{and}\quad
\moyo{A}{\gamma}=
\leftindex^{\mathfrak{G}}{\Int}_{\Omega}^{\oplus}
\moyo{\AW}{\gamma}\,\mu(d\omega).
\end{equation}
In addition, it follows from \cite[Proposition~3.12]{Cana24} that 
$L$ is a well-defined bounded linear operator with adjoint
\begin{equation}
\label{e:v1}
L^*\colon\HH\to\XS\colon
x^*\mapsto\int_{\Omega}\LW^*x^*(\omega)\mu(d\omega)
\end{equation}
and such that $\norm{L}\leq 1$. We also recall from 
\eqref{e:kvfp} that
\begin{equation}
\label{e:r0}
\rcm(\LW,\gamma\AW)_{\omega\in\Omega}=\proxcc{L}{(\gamma A)}.
\end{equation}
Further, \eqref{e:p1} is equivalent to 
\begin{equation}
\label{e:p6}
\text{find}\;\:\mathsf{x}\in{\mathsf{V}}\;\:\text{such that}\;\:
0\in A(L\mathsf{x})
\end{equation}
and \eqref{e:p2} is equivalent to 
\begin{equation}
\label{e:p7}
\text{find}\;\:\mathsf{x}\in\XS\;\:\text{such that}\;\:
\mathsf{0}\in\Bigl(\proxc{\proj_{\mathsf{V}}}
{\bigl(\proxcc{L}{(\gamma A)}\bigr)}\Bigr)\mathsf{x}.
\end{equation}

\ref{p:24i}:
We derive from \cite[Theorem~3.8(i)]{Cana24} that $A$ is maximally 
monotone and hence from \cite[Theorem~4.5(ii)]{Svva23} that 
$\proxcc{L}{(\gamma A)}$ is likewise. In turn, 
\cite[Theorem~4.5(i)]{Svva23} and \eqref{e:r0} assert that
$\mathsf{W}=\proxc{\proj_{\mathsf{V}}}{(\proxcc{L}{(\gamma A)})}$
is maximally monotone.

\ref{p:24ii}: By invoking successively
\cite[Theorem~6.3(ii)]{Svva23}, \eqref{e:e11}, and
\eqref{e:v1}, we obtain 
\begin{align}
J_{\mathsf{W}}
&=\proj_{\mathsf{V}}\circ\bigl(\Id_{\XS}
+L^*\circ(J_{\gamma A}-\Id_{\HH})\circ L\bigr)
\circ\proj_{\mathsf{V}}\nonumber\\
&=\proj_{\mathsf{V}}\circ\brk3{\Id_{\mathsf{X}}+
\int_{\Omega}\bigl(\LW^*\circ(J_{\gamma\AW}-\Id_{\HW})
\circ\LW\bigr)\mu(d\omega)}\circ\proj_{\mathsf{V}}.
\end{align}

\ref{p:24iii}:
Use \eqref{e:p6}, \eqref{e:p7},
and \cite[Theorem~6.3(iii)]{Svva23}.

\ref{p:24iv}:
Combine \eqref{e:p7} and \cite[Theorem~6.3(v)]{Svva23}.

\ref{p:24v}:
Using \eqref{e:p7}, \cite[Theorem~6.3(vi)]{Svva23},
\eqref{e:e11}, and \eqref{e:v1}, we obtain
\begin{equation}
\mathsf{S}_2
=\zer\bigl(N_{\mathsf{V}}+L^*\circ(\moyo{A}{\gamma})\circ L\bigr)
=\zer\brk3{N_{\mathsf{V}}+\int_\Omega
\bigl(\LW^*\circ(\moyo{\AW}{\gamma})\circ\LW\bigr)\mu(d\omega)},
\end{equation}
which concludes the proof.
\end{proof}

We now present an algorithm to solve Problem~\ref{prob:11}.

\begin{proposition}
\label{p:22}
Suppose that Problem~\ref{prob:11} has a solution, 
let $(\lambda_n)_{n\in\NN}$ be a sequence in $\left]0,2\right[$
such that $\sum_{n\in\NN}\lambda_n(2-\lambda_n)=\pinf$, and let
$\mathsf{x}_0\in\mathsf{V}$. Iterate
\begin{equation}
\label{e:9}
\begin{array}{l}
\text{for}\;n=0,1,\ldots\\
\left\lfloor
\begin{array}{l}
\text{for $\mu$-almost every}\,\,\omega\in\Omega\\
\left\lfloor
\begin{array}{l}
y_n(\omega)=\LW\mathsf{x}_n\\
q_n(\omega)=y_n(\omega)-J_{\gamma\AW}y_n(\omega)\\
\end{array}
\right.\\
\mathsf{z}_n=\int_{\Omega}\LW^*(q_n(\omega))\mu(d\omega)\\
\mathsf{x}_{n+1}=\mathsf{x}_n-
\lambda_n\proj_{\mathsf{V}}\mathsf{z}_n.
\end{array}
\right.
\end{array}
\end{equation}
Then $(\mathsf{x}_n)_{n\in\NN}$ converges weakly to a solution to 
Problem~\ref{prob:11}.
\end{proposition}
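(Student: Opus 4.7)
The plan is to recast the iteration \eqref{e:9} as a relaxed Krasnosel'ski\u{\i}--Mann scheme for the firmly nonexpansive operator $J_{\mathsf{W}}$ and then invoke a standard weak-convergence result. By Proposition~\ref{p:24}\ref{p:24i}, $\mathsf{W}$ is maximally monotone; consequently $J_{\mathsf{W}}\colon\XS\to\XS$ is firmly nonexpansive with $\Fix J_{\mathsf{W}}=\zer\mathsf{W}=\mathsf{S}_2$, and the standing assumption that Problem~\ref{prob:11} admits a solution guarantees $\Fix J_{\mathsf{W}}\neq\emp$.

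Next, I would prove by induction that $\mathsf{x}_n\in\mathsf{V}$ for every $n\in\NN$: this holds for $n=0$ by hypothesis, and since $\mathsf{V}$ is a vector subspace containing both $\mathsf{x}_n$ and $\proj_{\mathsf{V}}\mathsf{z}_n$, the update $\mathsf{x}_{n+1}=\mathsf{x}_n-\lambda_n\proj_{\mathsf{V}}\mathsf{z}_n$ remains in $\mathsf{V}$. Using Proposition~\ref{p:24}\ref{p:24ii} together with $\proj_{\mathsf{V}}\mathsf{x}_n=\mathsf{x}_n$ and the linearity of $\proj_{\mathsf{V}}$ on $\XS$, I then compute
\begin{align}
J_{\mathsf{W}}\mathsf{x}_n
&=\proj_{\mathsf{V}}\brk3{\mathsf{x}_n+\int_{\Omega}
\LW^*\brk!{J_{\gamma\AW}(\LW\mathsf{x}_n)-\LW\mathsf{x}_n}
\mu(d\omega)}\nonumber\\
&=\proj_{\mathsf{V}}\brk!{\mathsf{x}_n-\mathsf{z}_n}
=\mathsf{x}_n-\proj_{\mathsf{V}}\mathsf{z}_n,
\end{align}
so that \eqref{e:9} is equivalent to $\mathsf{x}_{n+1}=(1-\lambda_n)\mathsf{x}_n+\lambda_n J_{\mathsf{W}}\mathsf{x}_n$.

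Since $J_{\mathsf{W}}$ is firmly nonexpansive, hence $1/2$-averaged, and since $(\lambda_n)_{n\in\NN}$ lies in $\left]0,2\right[$ with $\sum_{n\in\NN}\lambda_n(2-\lambda_n)=\pinf$, the classical Krasnosel'ski\u{\i}--Mann convergence theorem \cite[Proposition~5.16]{Livre1} then delivers the weak convergence of $(\mathsf{x}_n)_{n\in\NN}$ to a point of $\Fix J_{\mathsf{W}}=\mathsf{S}_2$. The main piece of bookkeeping is to confirm that the integrals defining $\mathsf{z}_n$ are well posed in $\XS$: the measurability of $\omega\mapsto q_n(\omega)$ follows from Assumptions~\ref{a:4}\ref{a:4c} and \ref{a:2}\ref{a:2b}, whereas the integrability rests on Proposition~\ref{p:4}\ref{p:4i} applied to the $1$-Lipschitzian operators $(\Id_{\HW}-J_{\gamma\AW})_{\omega\in\Omega}$. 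I expect this verification to be the only delicate step; once the iteration has been identified as a Mann-type scheme for $J_{\mathsf{W}}$, the convergence is off the shelf.
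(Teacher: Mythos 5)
Your proposal is correct and takes essentially the same route as the paper: both recast \eqref{e:9} as the relaxed proximal-point iteration $\mathsf{x}_{n+1}=\mathsf{x}_n+\lambda_n(J_{\mathsf{W}}\mathsf{x}_n-\mathsf{x}_n)$ for the maximally monotone operator $\mathsf{W}$ of Proposition~\ref{p:24} and then invoke a standard weak-convergence theorem (the paper cites \cite[Lemma~2.2(vi)]{Joca09} where you cite the Krasnosel'ski\u{\i}--Mann result for averaged operators, which is equivalent here). Your explicit induction showing $\mathsf{x}_n\in\mathsf{V}$, needed to absorb the outer and inner $\proj_{\mathsf{V}}$ in the expression for $J_{\mathsf{W}}$ from Proposition~\ref{p:24}\ref{p:24ii}, is a detail the paper leaves implicit.
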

\begin{proof}
Set $\mathsf{W}=\proxc{\proj_{\mathsf{V}}}
{\rcm(\LW,\gamma\AW)_{\omega\in\Omega}}$ and recall from
Proposition~\ref{p:24}\ref{p:24ii} that 
\begin{equation}
J_{\mathsf{W}}=\proj_{\mathsf{V}}\circ
\brk3{\Id_{\mathsf{X}}+\int_{\Omega}
\bigl(\LW^*\circ(J_{\gamma\AW}-\Id_{\HW})
\circ\LW\bigr)\mu(d\omega)}\circ\proj_{\mathsf{V}}.
\end{equation}
We derive from \eqref{e:9}, \eqref{e:e11}, and \eqref{e:v1} that
$(\mathsf{x}_n)_{n\in\NN}$ is generated by the proximal point
algorithm
\begin{equation}
(\forall n\in\NN)\quad
\mathsf{x}_{n+1}=\mathsf{x}_n+\lambda_n
(J_{\mathsf{W}}\mathsf{x}_n-\mathsf{x}_n).
\end{equation}
It then follows from \cite[Lemma~2.2(vi)]{Joca09} that
$(\mathsf{x}_n)_{n\in\NN}$ converges weakly to a point in
$\zer\mathsf{W}$, i.e., a solution to \eqref{e:p2}. 
\end{proof}

\begin{example}
\label{ex:5}
Let us specialize Problem~\ref{prob:10} to the scenario in which
\begin{multline}
(\forall\omega\in\Omega)\quad
\AW=\bigl(\Id_{\HW}-\TW+\mathsf{r}_\omega\bigr)^{-1}-\Id_{\HW},\\
\text{where}\;\;
\begin{cases}
\TW\colon\HW\to\HW\;\text{is firmly nonexpansive}\\
\mathsf{r}_{\omega}\in\HW.
\end{cases}
\end{multline}
Then \eqref{e:p1} becomes 
\begin{equation}
\label{e:p4}
\text{find}\,\,\mathsf{x}\in\mathsf{V}\,\,\text{such that}\,\,
(\forallmu\omega\in\Omega)\,\,
\TW(\LW\mathsf{x})=\mathsf{r}_{\omega}.
\end{equation}
This model has been considered in \cite{Siim22} in the setting of 
Example~\ref{ex:1i+}. There, $\Omega$ is a finite set and each 
$\mathsf{r}_\omega$ models the observation of an unknown signal 
$\mathsf{x}\in\HS$ through a Wiener system, i.e., the concatenation
of a nonlinear operator $\TW$ and a linear transformation $\LW$.  
Our framework allows us to extend it to models with a continuum of
observations. In this context, and \eqref{e:p2} yields the relaxed
problem 
\begin{equation}
\label{e:45}
\text{find}\:\;\mathsf{x}\in\mathsf{V}\,\,\text{such that}\,\,
\int_\Omega\LW^*\bigl(\TW(\LW\mathsf{x})-\mathsf{r}_\omega\bigr)
\mu(d\omega)\in\mathsf{V}^\bot.
\end{equation}
Furthermore, \eqref{e:9} becomes 
\begin{equation}
\label{e:94}
\begin{array}{l}
\text{for}\;n=0,1,\ldots\\
\left\lfloor
\begin{array}{l}
\text{for $\mu$-almost every}\,\,\omega\in\Omega\\
\left\lfloor
\begin{array}{l}
y_n(\omega)=\LW\mathsf{x}_n\\
q_n(\omega)=\TW y_n(\omega)-\mathsf{r}_{\omega}\\
\end{array}
\right.\\
\mathsf{z}_n=\int_{\Omega}\LW^*(q_n(\omega))\mu(d\omega)\\
\mathsf{x}_{n+1}=\mathsf{x}_n-
\lambda_n\proj_{\mathsf{V}}\mathsf{z}_n.
\end{array}
\right.
\end{array}
\end{equation}
\end{example}

\end{document}